\documentclass[12pt,reqno]{amsart}

\usepackage{amsmath,amsthm,amssymb,comment,fullpage}

\usepackage{times}
\usepackage[T1]{fontenc}
\usepackage{mathrsfs}
\usepackage{latexsym}
\usepackage[dvips]{graphics}
\usepackage{epsfig}
\usepackage{amsmath,amsfonts,amsthm,amssymb,amscd}
\input amssym.def
\input amssym.tex
\usepackage{color}
\usepackage{hyperref}
\usepackage{url}



\newcommand{\burl}[1]{\textcolor{blue}{\url{#1}}}

\newcommand{\ch}{{\bf 1}}

\newcommand{\cal}{\mathcal}

\numberwithin{equation}{section}

\newtheorem{thm}{Theorem}[section]

\newtheorem{cor}[thm]{Corollary}

\newtheorem{prop}[thm]{Proposition}

\newtheorem{defi}[thm]{Definition}

\theoremstyle{plain}

\newtheorem{example}[thm]{Example}
\newtheorem{lemma}[thm]{Lemma}

\newtheorem{theorem}[thm]{Theorem}




\newcommand\be{\begin{equation}}
\newcommand\ee{\end{equation}}
\newcommand\bea{\begin{eqnarray}}
\newcommand\eea{\end{eqnarray}}
\newcommand\bi{\begin{itemize}}
\newcommand\ei{\end{itemize}}
\newcommand\ben{\begin{enumerate}}
\newcommand\een{\end{enumerate}}
\newcommand\bc{\begin{center}}
\newcommand\ec{\end{center}}
\newcommand\ba{\begin{array}}
\newcommand\ea{\end{array}}

\newcommand\usb[2]{\underset{#1}{\underbrace{#2}}}


\newcommand{\umessclean}[2]{\underset{=#1}{\underbrace{#2}}}


\newcommand{\C}{\ensuremath{\mathbb{C}}}
\newcommand{\Z}{\ensuremath{\mathbb{Z}}}

\newcommand{\N}{\mathbb{N}}


\newcommand\frakfamily{\usefont{U}{yfrak}{m}{n}}
\DeclareTextFontCommand{\textfrak}{\frakfamily}
\newcommand\G{\textfrak{G}}





\newcommand{\twocase}[5]{#1 \begin{cases} #2 & \text{{\rm #3}}\\ #4
&\text{{\rm #5}} \end{cases}   }


\newcommand{\hr}[1]{\href{#1}{\url{#1}}}

\linespread{1.5}

\title{A Probabilistic Approach to Generalized Zeckendorf Decompositions}

\author{Iddo Ben-Ari}
\email{\textcolor{blue}{\href{mailto:iddo.ben-ari@uconn.edu}{iddo.ben-ari@uconn.edu}}}
\address{Department of Mathematics, University of Connecticut, Storrs, CT 06269}

\author{Steven J. Miller}
\email{\textcolor{blue}{\href{mailto:sjm1@williams.edu}{sjm1@williams.edu}},  \textcolor{blue}{\href{Steven.Miller.MC.96@aya.yale.edu}{Steven.Miller.MC.96@aya.yale.edu}}}
\address{Department of Mathematics and Statistics, Williams College, Williamstown, MA 01267}

\thanks{The first named author was partially supported by NSA grant H98230-12-1-0225 and by grant  282912 from the Simons Foundation.  The second named author was partially supported by NSF grants DMS1265673 and DMS1561945. We thank the participants of the 2011, 2012 and 2013 SMALL REU at Williams College for many useful discussions, and the referee for helpful comments on an earlier draft, especially on related work.}

\subjclass[2010]{11B39, 11B05  (primary) 65Q30, 60B10 (secondary)}

\keywords{Zeckendorf decompositions, positive linear recurrence relations, distribution of gaps, longest gap, Markov processes, finite alphabet}

\date{\today}

\begin{document}

\maketitle

\begin{abstract}
Generalized Zeckendorf decompositions are expansions of integers as sums of elements of  solutions to recurrence relations. The simplest cases are base-$b$ expansions, and the standard Zeckendorf decomposition uses the Fibonacci sequence. The expansions are finite sequences of nonnegative integer coefficients (satisfying certain technical conditions to guarantee uniqueness of the decomposition) and which can be viewed as analogs of sequences of variable-length words made from some fixed alphabet.  In this paper we present a new approach and construction for  uniform measures on expansions, identifying them as the distribution of a Markov chain conditioned not to hit a set. This gives  a unified approach that allows us to easily recover results on the expansions from analogous results for Markov chains, and in this paper we focus on laws of large numbers, central limit theorems for sums of digits,  and statements on gaps (zeros) in expansions. We expect  the approach to  prove useful in other similar contexts. \end{abstract}





\section{Introduction}
\subsection{Background}
\label{sec:intro_intro}
A representation of the set of integers in terms of a sequence of digits is known in the literature as a numeration system. The most common numeration systems are decimal (aka radix) expansions, yet many other numeration systems appear in theory and applications, and the study of numeration systems has been an active research area in mathematics and theoretical computer science. Many of these arise from a greedy algorithm (see for example \cite{ref4}), though there are systems arising from recurrence relations where the greedy algorithm fails a positive percentage of the time (see \cite{CFHMN2,CFHMNPX}). While our focus will be on recurrence relations and greedy algorithms, other choices are possible and often closely related. These include starting from a rational language and, using an ordering inherited from an ordering of the digits, representing $n$ as the $n$\textsuperscript{th} element of the language (see \cite{ref6}), or (see \cite{ref1, ref2}) starting with a substitution $\sigma$ on a finite alphabet and encoding $n$ by the $n$ letter prefix of a fixed point of $\sigma$ (represented by concatenating iterates of $\sigma$ applied to certain letters, which are the digits), or having variable rules for which summands are available at which points in a decomposition (see the $f$-decompositions of \cite{DDKMMU}). \\

As many closely related systems are studied in different disciplines, often the same result is proved again and again, though from different vantages. Stolarsky \cite{Sto} (see also \cite{CHZ}) wrote: \emph{Whatever its mathematical virtues, the literature on sums of digital sums reflects a lack of communication between researchers.} We agree, and in fact this lack of communication was the impetus for the present paper. While many of our results are already known, we adopt a perspective used fruitfully in related problems and give a unified treatment using Markov methods (see for example \cite{ref3, ref5, ref7, MW1}) of many results previously done through combinatorial approaches. In particular, we apply these techniques to some problems that appear not to have been studied by other researchers using these methods, such as properties of gaps between summands.\\

We focus on the case where the numeration system is obtained from the greedy algorithm. Unfortunately there are several different notational conventions in the subject, depending on the perspective one adopts. We use a simple one below to motivate the problem, and  discuss the small changes later.

Fix a sequence  of integers $1=u_0<u_1< \cdots$ (also known as the basis). Then any $N\in\N$ can be represented uniquely as a combination of elements from the sequence as follows.  Let $u_n$ be the largest element in the sequence  which is $\le N$, and set $d_n =\lfloor  N / u_n\rfloor $. Continue inductively by letting $d_{k-1} = \lfloor  (N - \sum_{n \ge k\ge j  } d_j u_j)/u_{k-1} \rfloor$, for $k=n,\dots, 1$.  Clearly, the digits  $d_1,\dots,d_N$ are uniquely determined, and it is easy to see that $N= \sum_{0 \le j \le n} d_j u_j$.   We refer the reader to \cite{ref4} for more details and results.  The sequence of digits $d_n\dots d_1$, is the  word representing $N$ relative to the basis $(u_n)$. A numeration system is called regular if it can be given as an output of a finite automaton, or, equivalently, the set of words is a regular language.  It is known that  for the greedy algorithm to be regular, $(u_n)$ must satisfy a linear recurrence relation with integer coefficients \cite{shallit}. A partial converse also holds \cite{hollander}. As a result, the numeration systems associated to linear recurrence are of outmost importance for theory and applications.  The simplest examples are when  $u_n = b^n$ for some integer $b\ge 2$,  and the resulting numeration system is the base-$b$ decimal system (or $b$-radix system). The corresponding language is simply set of all word from the alphabet $\{0,\dots,b-1\}$.  When $u_1=1$, $u_2=2$ and for $n\ge 1$ we take $u_{n+1} = u_n +u_{n-1}$, we obtain the Fibonacci numeration system, also commonly and henceforth referred to as the Zeckendorf decomposition.  In this system each natural number is uniquely expressed as a sum of non-adjacent elements of the Fibonacci sequences (for us the Fibonacci sequence starts  $1,2,3,5,8,\dots$, as otherwise we do not have unique decompositions), and the corresponding language is all binary sequences starting with $1$ and  with no adjacent $1$'s, formally expressed as $1\{0,01\}^*$ where $*$ is the Kleene star. For example for $N= 11=8+3=F_5+F_3$, so that $d_5=1,d_4=0,d_3=1,d_2=d_1=0$,  and the decomposition could be viewed as the binary sequence $10100$.



\subsection{The Generalized Zeckendorf Decomposition}
\label{sec:zeckendorf}

We now introduce the generalized Zeckendorf decomposition and present some related results. This discussion is mostly a motivation and preparation for our probabilistic construction. These results have been extensively studied in the past both for the Zeckendorf and generalized Zeckendorf and also for other numeration systems, and we will discuss this in Section \ref{sec:prob_app} below.  \\

Recall that  if we define the Fibonacci numbers $\{F_n\}$ by $F_1 = 1$, $F_2 = 2$ and $F_{n+2} = F_{n+1} + F_n$, then every integer can be written uniquely as a sum of non-adjacent Fibonacci numbers. This is known as Zeckendorf's Theorem \cite{Ze}. For integers $m \in [F_n, F_{n+1})$, using a continued fraction approach Lekkerkerker \cite{Lek} proved that the average number of summands is $n/(\varphi^2 + 1)$, with $\varphi = \frac{1+\sqrt{5}}2$ the golden mean. The precise probabilistic meaning of ``average" is the expectation with respect to the uniform measure on the decompositions of integers in $[F_n,F_{n+1})$, and then Zeckendorf's theorem provides an asymptotic statement on a certain statistic under the sequence of uniform probability measures on decompositions of length $n$, as $n\to\infty$. Analogues hold for more general recurrences, such as linear recurrences with non-negative coefficients \cite{Al,BCCSW,Day,GT,Ha,Ho,Ke,Len,MW1,MW2}, generalizations where additionally the summands are allowed to be signed \cite{DDKMU,MW1}, and $f$-decompositions (given a function $f:\N \to \N$, if $a_n$ is in the decomposition then we do not have $a_{n-1}, \dots, a_{n-f(n)}$ in the decomposition) \cite{DDKMMU}. The notion of a legal decomposition below generalizes the non-adjacency condition.

\begin{defi}
\label{def:lin_recur}
Given  a \textbf{length} $L\in \N$ and  \textbf{coefficients}  $c_1,\dots,c_L\in \Z_+$ with $c_1 c_L>0$, the corresponding  \textbf{positive linear recursion} is a sequence $1=G_1,G_2,\ldots\in \N$ satisfying\begin{align}
  G_{n+1} &\ = \  c_1 G_n + c_2 G_{n-1} + \cdots + c_n G_{1}+1,~n=1,\dots, L-1,\nonumber\\
   G_{n+1} &\ = \  \sum_{j=1}^L c_j G_{n+1 - j},~ n=L,L+1,\dots.
 \end{align}
\end{defi}

\begin{defi}
\label{def:legal} Given a positive linear recursion with coefficients $c_1,\dots,c_L$, an integer $N$ has a \textbf{legal} decomposition of length $n\in \N$ if there exist  $a_1\in \N, a_2,\dots,a_n \in \Z_+$, such that
\be\label{eq:decomposition}
 N\ =\  \sum_{i=1}^{n} {a_i G_{n+1-i}},\ee
and
\begin{itemize}
\item $n<L$ and $a_i=c_i$ for $1\le i \le n$; or
\item  there exists some $s\in\{1,\dots, L\}$ such that
\be \label{eq:legalcondition2}
 \left. \begin{array}{l}
  a_1\ = \ c_1,\ a_2\ = \ c_2,\ \dots,\ a_{s-1}\ = \ c_{s-1}, \mbox{ and } a_s<c_s, \\
  a_{s+1}, \dots, a_{s+\ell} =  0\mbox{  for some }\ell \ge 0,\\
  \{b_i\}_{i=1}^{n-s-\ell}\mbox{ with }b_i = a_{s+\ell+i}, \mbox{ is either legal or empty.}
\end{array}\right\} \ee
 \end{itemize}
\end{defi}

We remark that the notation above differs slightly from the representation as $\sum_j d_j u_j$; because of our  use of the recurrence relation for our analysis it is more convenient to index this way. To emphasize this we now use $a_i$ for the digits and $G_n$ for our sequence. It is important that $c_1 c_L > 0$, as when this fails there are some sequences where decompositions still exist but are no longer unique, and others where the decompositions are still unique; see \cite{CFHMN1,CFHMN2,CFHMNPX,DFFHMPP}. The following theorem has been proved many times (see for example \cite{MW1}), and is the starting point for our investigations.

\begin{thm}[Generalized Zeckendorf Decomposition]
\label{th:legal}
Consider a positive linear recurrence  with coefficients   $c_1,\dots,c_L$ and $c_1 cL > 0$. Then  every $N\in\N$ has a unique legal decomposition.
\end{thm}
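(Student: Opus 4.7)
My plan is to prove both existence and uniqueness by strong induction on $N$, with the greedy algorithm supplying existence and a sharp upper bound on legal sums supplying uniqueness. The structural fact I would set up first is the \emph{range lemma}: a legal decomposition of length $n$ represents an integer in the interval $[G_n, G_{n+1})$. The lower bound is immediate because $a_1 \geq 1$. For the upper bound I would observe that among all tuples satisfying the legal condition with a fixed length $n$, the lexicographically largest one is $(c_1, c_2, \dots, c_{L-1}, c_L - 1, c_1, c_2, \dots)$ padded appropriately, and use the recursion $G_{n+1} = \sum_{j=1}^L c_j G_{n+1-j}$ (together with the initialization $G_{m+1} = c_1 G_m + \cdots + c_m G_1 + 1$ for $m < L$) to show by induction on $n$ that every legal sum is strictly less than $G_{n+1}$.

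Granting the range lemma, uniqueness is essentially automatic. If $N$ has two legal decompositions of lengths $n$ and $n'$ with $n < n'$, then the first lies in $[G_n, G_{n+1})$ and the second in $[G_{n'}, G_{n'+1})$, which are disjoint, contradiction. Hence any two legal decompositions of $N$ have the same length $n$. Then I would compare the leading coefficients $a_1$ and $a_1'$: subtracting the smaller leading term from both sides reduces to either comparing decompositions that share an initial run (and then the legal condition forces the next index $s$ to coincide and one inducts downward), or to comparing a tail that is itself legal of strictly shorter length, again contradicting the range lemma.

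For existence I would give the explicit greedy construction. Given $N$ with $G_n \leq N < G_{n+1}$, choose $a_1$ to be the largest integer with $a_1 G_n \leq N$, so $a_1 \leq c_1$ by the recursion. If $a_1 < c_1$ the residual $N - a_1 G_n$ is less than $G_n$, so by the inductive hypothesis it has a unique legal decomposition of some length $< n$, which I extend by zeros and prepend $a_1$; the clause in \eqref{eq:legalcondition2} with $s=1$ is satisfied. If $a_1 = c_1$, I iterate, reading off $a_2, a_3, \dots$ in the same greedy fashion until the first index $s$ where $a_s < c_s$ occurs; the tail is then handled by induction. Termination in the positivity of the residual is guaranteed because at each step the remainder drops below $G_{n-k+1}$.

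The main obstacle will be the upper-bound half of the range lemma, because the legal condition is recursive and involves two cases (the short-length clause and the "exists $s$" clause with its zero-padding). I expect to handle this by showing first that for any length $n$ the lexicographically maximal legal tuple is the one induced by writing $G_{n+1} - 1$ via the recursion, and then verifying that this tuple indeed evaluates to $G_{n+1} - 1$ via a direct telescoping computation that uses $G_m = c_1 G_{m-1} + \cdots + c_{m-1} G_1 + 1$ in the initial segment and the full recurrence afterwards. The rest of the proof is bookkeeping around the greedy algorithm, for which the careful base cases $n \leq L$ also require attention.
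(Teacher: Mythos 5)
The paper does not supply its own proof of Theorem~\ref{th:legal}; it is stated as a known result and used throughout, so your proposal has to stand on its own. Your overall plan---range lemma, uniqueness from disjoint ranges plus a first-difference comparison, existence by a greedy construction---is the standard and correct architecture for this theorem, and most of the pieces fit together. But there is a genuine error in the existence step. You assert: ``choose $a_1$ to be the largest integer with $a_1 G_n \le N$, so $a_1 \le c_1$ by the recursion.'' This is false in general. Take $L=2$, $c_1=1$, $c_2=3$. Then $G_1=1$, $G_2=2$, $G_3=5$, $G_4=11$, and $N=10\in[G_3,G_4)$ gives $\lfloor N/G_3\rfloor = 2 > c_1 = 1$; the actual legal decomposition is $(1,2,1)$, i.e.\ $10 = 1\cdot 5 + 2\cdot 2 + 1\cdot 1$. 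The uncapped greedy digit exceeds $c_1$ whenever $G_{n+1} > (c_1+1)G_n$, which happens as soon as $\sum_{j\ge 2} c_j G_{n+1-j} \ge G_n$, and there is no reason that fails. The repair is to cap the greedy at the legality bound: at step $k$ set $a_k = \min\left(\left\lfloor r_k/G_{n+1-k}\right\rfloor,\, c_k\right)$ where $r_k$ is the running residual, continue while $a_k = c_k$, and stop at the first $s$ with $a_s < c_s$. Such an $s \le L$ must occur because $\sum_{j=1}^{L} c_j G_{n+1-j} = G_{n+1} > N$, and once the cap no longer binds the residual does drop below $G_{n+1-s}$, so the induction you describe proceeds. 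With that correction the rest of your existence bookkeeping, including the $s=1$ dispatch into \eqref{eq:legalcondition2} and the $a_1\ge 1$ lower bound, goes through.

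Two secondary points worth tightening. For the upper bound in the range lemma you do not need to identify the lexicographically maximal legal tuple and then separately argue it maximizes the value (that second step requires the same comparison machinery you use for uniqueness, so the argument risks circularity); a direct strong induction on the length $n$, using $(a_s+1)\le c_s$, $G_{n-s-\ell+1}\le G_{n+1-s}$, and the recursion for $G_{n+1}$, reaches $\sum_i a_i G_{n+1-i} < G_{n+1}$ more transparently. And in the uniqueness step, rather than subtracting leading terms and tracking the parameter $s$, it is cleaner to let $j$ be the first index where the two decompositions disagree, say $a_j < a'_j$: the post-$j$ tail of the first decomposition contributes strictly less than $G_{n+1-j}$ by the range lemma, while the second contributes at least $(a_j+1)G_{n+1-j}$, an immediate contradiction.
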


The main idea in the theorem is to identify the notion of legal decomposition from \eqref{def:legal} with the representation obtained from the greedy algorithm.  The  characteristic polynomial  for the recurrence relation is given by Lemma \ref{lem:char} and is equal to $p(x) = x^L - \sum_{j=1}^L c_j  x^{L-j} $. Its Perron (aka dominant) eigenvalue $\lambda_C>1$, and satisfies $1= \sum_{j=1}^L c_j \lambda_C^{-j}$, and it then follows from \cite[Theorem 8.1]{hollander} that the generalized Zeckendorf decomposition is regular. Here is a corresponding finite automaton.  The states are labeled $(i,j)$, where $i=1,\dots,L$ and $j\in\{0,\dots,c_i\}$ for $i<L$ and $j\in\{0,\dots,c_i-1\}$ if $i=L$. If $L>1$, the initial states are $(1,0),\dots, (1,c_1)$. The transitions are as follows. From $(i,j)$ where $j<c_i$, there an edge to all states of the form $(i,j')$, and  if $j=c_i$ (only possible when $i<L$), then there an arrow to all states of the form $(i+1,j')$. As an example of how this works, consider the recurrence relation of length $L=3$ with $c_1=c_2=c_3=1$. Then we have $(G_n)_{n\in\N}= (1,2,3,6,11,20,37,\dots)$. Consider the word $1101$. Then the corresponding path for the automaton is $(1,1)\to (2,1) \to (1,0)\to (1,0)$, and it is accepted. If, however, we consider the word $1110$ then the first two  vertices in the path are  $(1,1)\to (2,1)$. However, since $c_2=1$, $L=3$ and $c_3=1$, it follows that the only allowed transition from $(2,1)$ is to $(3,0)$, but as the third digit is equal to $1$, this sequence is rejected. In fact, the accepted sequences are exactly those beginning with $1$, and having no three consecutive ones, which we can formally write as the regular language  $\{1,11\}\{0,01,011\}^*$, where $*$ is the Kleene star, and this is exactly the set of legal decompositions. \\

In the sequel we will fix a linear recurrence as in Definition \ref{def:lin_recur}. From Theorem \ref{th:legal} it follows that there's a one-to-one correspondence between the set of integers in $[G_n,G_{n+1})$
 through \eqref{eq:decomposition}, where the integer $N$ is mapped to its legal decomposition $(a_1(N),\dots,a_n(N))$. Let $Q_n$ denote the uniform distribution on the legal decompositions of integers in $[G_n,G_{n+1})$, and with this identification it is natural to consider  $N$ and $a_1(N),\dots,a_n(N)$ as random variables. In what follows, we denote expectation with respect to $Q_n$ by $E^{Q_n}$.\\

For  $N \in [G_n,G_{n+1})$, \eqref{eq:decomposition} can be rewritten as
\be\label{eq:kN_def}  N \ = \ G_{i_1(N)} + G_{i_2(N)} + \cdots + G_{i_{k(N)}},\ee
where $1\le i_1 \le \dots \le i_{k(N)}\le n$. The random variable $k(N)$ gives the number of summands, in the generalized Zeckendorf decomposition, or the sum of digits, that is, $k(N)=\sum_{i=1}^n a_i(N)$. It was the main object of previous works. The first result was Lekkerkerker's theorem on the asymptotic expectation of $k(N)$ when $G_n= F_n$. Here is its generalization to our setting.

   \begin{thm}[Generalized Lekkerkerker's Theorem]
\label{th:gen_lek}
 There exist constants $C_{{\rm Lek}} > 0$ and $d$ such that
 \be E^{Q_n} k(N)\ =\ C_{{\rm Lek}}n + d + o(1) \ \mbox{as}\ n\to\infty.\ee
\end{thm}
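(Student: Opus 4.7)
The plan is to realize $Q_n$ as the conditional distribution of an i.i.d.\ block sequence (a renewal/Markov chain) given total length $n$, and to extract the expected number of summands as an additive functional of this conditioned chain via generating-function / singularity analysis.

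First I would parse every legal decomposition $(a_1,\dots,a_n)$ uniquely into consecutive \emph{blocks}. Applying the second bullet of Definition~\ref{def:legal} recursively, and always taking the trailing run of zeros to be maximal, one sees that each block has the form
\[ B_{s,a,\ell} \;=\; \bigl(c_1,\dots,c_{s-1},a,\underbrace{0,\dots,0}_{\ell}\bigr), \qquad 1\le s\le L,\;\; 0\le a<c_s,\;\; \ell\ge 0, \]
the last block ends precisely at position $n$, and the parse is unique. Associate to $B=B_{s,a,\ell}$ its length $|B|=s+\ell$ and its summand-count weight $w(B)=c_1+\cdots+c_{s-1}+a$. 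Because blocks may be concatenated freely, the set of legal decompositions of length $n$ is in bijection with finite block sequences $(B_1,\dots,B_r)$ satisfying $\sum_j|B_j|=n$, and $k(N)=\sum_j w(B_j)$. Consequently $Q_n$ coincides with the law of an i.i.d.\ block sequence conditioned on total length $n$, i.e.\ a renewal/Markov chain conditioned on a return at time $n$, placing the problem squarely in the framework advertised in the introduction.

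Next I would carry out the asymptotic analysis through generating functions. Setting $Z(x)=\sum_B x^{|B|}$ and $W(x)=\sum_B w(B)\,x^{|B|}$, the block decomposition gives
\[ \sum_{n\ge 0} Z_n\,x^n \;=\; \frac{1}{1-Z(x)}, \qquad \sum_{n\ge 0}\Bigl(\sum_{N\in[G_n,G_{n+1})} k(N)\Bigr) x^n \;=\; \frac{W(x)}{\bigl(1-Z(x)\bigr)^2}, \]
where $Z_n$ is the number of legal decompositions of length $n$. Let $x_*$ be the smallest positive root of $Z(x)=1$; standard Perron-Frobenius / singularity analysis at $x_*$ produces expansions $Z_n=\alpha\,x_*^{-n}\bigl(1+O(\rho^n)\bigr)$ and $\sum_{N\in[G_n,G_{n+1})}k(N)=(\beta n+\gamma)\,x_*^{-n}\bigl(1+O(\rho^n)\bigr)$ for some $\rho<1$. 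Taking ratios yields
\[ E^{Q_n}k(N) \;=\; C_{{\rm Lek}}\,n \,+\, d \,+\, o(1), \]
with $C_{{\rm Lek}}=\beta/\alpha$ (a ratio of mean block weight to mean block length) and $d=\gamma/\alpha$ an explicit boundary correction.

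The main obstacle is the spectral-gap step, namely justifying the $1+O(\rho^n)$ error rather than merely $O(1)$: one must show that $x_*$ is the \emph{unique} dominant singularity of $1/(1-Z(x))$, equivalently that the block-length distribution is aperiodic. The hypothesis $c_1 c_L>0$ in Definition~\ref{def:lin_recur} is precisely what secures this, since $c_1>0$ yields blocks of length $1$ (take $s=1,\,a=0,\,\ell=0$) while $c_L>0$ yields blocks of length $L$, forcing $\gcd\{|B|\}=1$. Once the spectral gap is in hand, not only Theorem~\ref{th:gen_lek} but also the attendant law of large numbers and central limit theorem for $k(N)$ drop out of standard Markov chain limit theorems applied to the conditioned chain.
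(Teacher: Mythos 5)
Your route is genuinely different from the paper's. The paper builds a finite-state Markov chain $Z_n=(X_n,Y_n,Y_{n+1})$ under the uniform i.i.d.\ measure $P$, identifies $Q_n$ with $P(\cdot \mid \tau>n)$ for a hitting time $\tau$, passes to the Doob $h$-transform $Q$ via the Perron eigenpair of the killed kernel $P_{\mathcal L}$ (Theorem~\ref{th:structure}), and then reads Lekkerkerker off a general sharp-mean estimate for additive functionals of that conditioned chain (Theorem~\ref{th:additive_functionals}, proved with the group inverse $Q^{\#}$ and exponential ergodicity; see Corollary~\ref{cor:lekkerkerker}). You instead run a renewal/block decomposition and transfer-matrix generating functions. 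The two are close kin --- your blocks are exactly the excursions of $Z$ between regeneration states, and your $x_*$ is $1/\lambda_C$ --- but the paper's abstraction is built to deliver every classical Markov-chain limit theorem in one shot, whereas your singularity analysis is tailored to producing the first two moments.

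That said, your block set is not quite right, and the error changes the dominant singularity. For $s=1$ you must impose $a\ge 1$: every legal decomposition, and every legal tail in the recursion of Definition~\ref{def:legal}, begins with a strictly positive digit, so the block $(0,0,\dots,0)$ never occurs. Including it inflates $Z(x)$ from the correct $\bigl((c_1-1)x+\sum_{s\ge 2}c_s x^s\bigr)/(1-x)$, whose root is $x_*=1/\lambda_C$, to $\bigl(\sum_{s\ge 1} c_s x^s\bigr)/(1-x)$, which has a strictly smaller root and hence a wrong $C_{\rm Lek}$. Two smaller issues: the terminal ``boundary'' string $(c_1,\dots,c_m)$ with $m<L$ from the first bullet of Definition~\ref{def:legal} is a possible final block that your bijection omits (this only perturbs the constant $d$, but it must appear in the numerator generating function); and your aperiodicity argument ``$c_1>0$ yields length-$1$ blocks'' fails precisely when $c_1=1$, i.e.\ already for ordinary Zeckendorf --- the correct and simpler observation is that $\ell\ge 0$ is unconstrained, so (using $c_L>0$) the block-length support contains a full tail $\{L,L+1,\dots\}$ and trivially has gcd $1$. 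With these repairs your argument closes and is a valid alternative to the paper's proof.
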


Many of the proofs of Theorem \ref{th:gen_lek} are plagued by the need to prove results about roots of the characteristic polynomials associated to the recurrence in order to show $C_{{\rm Lek}} > 0$; recently, though, a combinatorial approach was developed in \cite{CFHMNPX} which bypasses these technicalities.

Once the average number of summands has been determined, it is natural to investigate other and finer properties of the decompositions.  Three natural questions concern the fluctuations in the number of summands $k(N)$ about the mean, the distribution of gaps $i_{j+1}(N) - i_j(N),~j=1,\dots,k(N)-1$ between adjacent summands, and the length of the longest gap in a decomposition. For positive linear recurrences as in Theorem \ref{th:legal}, the distribution of the number of summands converges to a Gaussian with computable mean and variance, both of order $n$. There is an extensive literature on these results. See \cite{DG,FGNPT,GTNP,LT,Ste1} for an analysis using techniques from ergodic theory and number theory, and \cite{KKMW,MW1,MW2} for proofs via a combinatorial perspective.  These results hold true for other numeration systems and  are exactly the kind of results referred to by Stolarsky in the quote given in Section \ref{sec:intro_intro}. As before, all these are statements on the asymptotic behavior of certain statistics of generalized Zeckendorf decompositions of integers in $[G_n,G_{n+1})$ under the uniform measure, as $n\to\infty$. \\

Results on the distribution of gaps between adjacent summands have recently been obtained by Beckwith, Bower, Gaudet, Insoft, Li, Miller and Tosteson \cite{BBGILMT, BILMT}. They show that the distribution of gaps larger than the recurrence length converges to that of a geometric random variable whose parameter is the largest eigenvalue of the characteristic polynomial of the recurrence relation. For gaps smaller than the recurrence relation closed forms exist for special recurrences, though with enough work explicit formulas can be derived for any given relation. They also determine the distribution of the longest gap, and prove the behavior is similar to that of the length of the longest run of heads in a sequence of tosses of a possibly biased coin. Their proofs are a mix of combinatorics and a careful analysis of polynomials associated with the recurrence relations. The details become involved as some of the associated polynomials depend on the interval $[G_n, G_{n+1})$ under consideration.

\subsection{Probabilistic Approach}
\label{sec:prob_app}
Most  results mentioned in Section \ref{sec:zeckendorf}  above are not unique to the generalized Zeckendorf, and similar and even finer results were obtained for other numeration systems. A recurring subject of study is the sum of digits function, which, as in the case of generalized Zeckendorf, we denote by $k(N)$. The sum of digits has a natural generalization to additive functions, that is that instead of summing the digits, the summation is over some fixed function applied to each digit (example: the indicator that the digit is not zero, and the resulting sum is the number of nonzero digits. This is the same as $k(N)$ for the standard Zeckendorf and for the binary system). We note that in many of the works, these additive functionals are referred to as sums of digits functions or additive functions.  The recent survey paper \cite{CHZ} presents results on sum of digits for the base-$b$ expansion, under the uniform measure on $[1,\dots, N)$, and includes a very rich  list of bibliography on the topic, including other numeration systems. Two other works we would like to highlight are \cite{ref3}, which provides expressions for limiting distributions for regular languages,  based on combinatorial and  matrix analysis, and  \cite{ref7}, which studies the additive functional through analysis of a corresponding time-inhomogeneous Markov chains. \\

So why another work on this topic? We believe that we have a new approach, which allows for a more comprehensive treatment, and is not limited to additive functionals. Specifically, what we provide here is a tractable analytic expression for the uniform distribution on generalized Zeckendorf decompositions of fixed length, that is for random numbers in the intervals of the form $[G_n,G_{n+1})$. The reason why we focus on these intervals is because this is were the structure has the simplest  expression (though with additional work the results can be extended to $[1, N)$, as shown in Appendix C of \cite{BILMT} and \S\ref{sec:geninitialseg}). The reason why we chose the generalized Zeckendorf is because of the large body of work on the generalized Zeckendorf in the setting of fixed-length decompositions, mentioned above, which  was the motivation for the present work, a natural setting to our construction  and a  reference point to examine our new approach to the model. \\

The main  idea concerns the problem of constructing uniform measures on words of fixed length $n$   from some alphabet under certain prescribed constraints. The alphabet is the set of digits $\{0,\dots,\max c_i\}$, the word is a sequence of length $n$ from the alphabet, and the constraint is that the word yields a legal decomposition. The uniform measure we are interested in is then the uniform measure on the set of legal decompositions of length $n$. In the base-$b$ case, the alphabet is $\{0,\dots,b-1\}$ and there is no constraint, in the Zeckendorf case, the alphabet is $\{0,1\}$ and the constraint is to have no consecutive $1$'s. In the generalized Zeckendorf, we will consider a similar, yet more complex constraint.  We  construct the uniform measure on legal decompositions from the uniform measure on the sequence of digits, that is, when the digits are IID, by conditioning. The observation is  that if the constraints are in some sense shift-homogenous and localized -- which is exactly the case for the generalized Zeckendorf decomposition -- then they can be realized through a stopping rule for the IID sequence, which eventually is reduced to a hitting time of a time-homogeneous Markov chain, and our uniform measure under constraints is then viewed as a Markov chain conditioned not to hit some set. Through some elementary transformations this conditioned measure coincides with the distribution of a  time-homogeneous Markov chain known in the literature as Doob's $h$-process, pinned to a point after $n$ steps. In other words, the analysis of the uniform measure boils down to the analysis of a certain related time-homogeneous Markov chain. We note that all the quantities above depend on the length of the sequence only through the time the Markov chain is pinned, so that regardless of the length of the decomposition, we only need to consider the evolution of a single Markov chain. This identification gives a very simple expression and characterization of the uniform measure on legal decompositions, which allows to compute many quantities with little effort, as we show in later sections. Furthermore, this approach gives access to the vast literature on Markov chains, specifically asymptotic results, but not limited to, as we have a simple formula for the uniform measure in terms of the Markov chain. \\

We illustrate our method by studying the classical problems of additive functionals including mean, law of large numbers and central limit theorem, as well as obtain new results on the distribution of gaps between non-zero digits in decompositions.

\subsection{Organization}
The paper is organized as follows. In Section \ref{sec:prob} we describe the Markovian model, and how to obtain large-time asymptotics for our model from that of the underlying Markov chain. In Section \ref{sec:additive} we present  the results on additive functionals in a setting which includes our particular model, first by introducing the theoretical results in Section \ref{sec:additive_theory} and then applying them to the generalized decompositions in Section \ref{sec:application_zeckendorf}.  These results  include the classical results in this area:  sharp estimates on expectation, a  law of large numbers and a central limit theorem. In Section \ref{sec:gaps} we then apply the results on additive functionals (or the sum of digits)  to the generalized Zeckendorf decompositions.  In Section \ref{sec:gaps} we treat the gap distribution as a consequence of the regenerative structure of the underlying Markov chain, and the analogy with Bernoulli trials. Finally, in the Appendix we explain how to extend our results for decompositions of fixed length, or numbers in the interval $[G_n,G_{n+1})$ to numbers in intervals of the form $[1,N)$.
\section{Probabilistic Approach}\label{sec:prob}
We remind that throughout the discussion we assume that $L\in\N$ and  the coefficients $c_1,\dots,c_L \in \Z_+$ satisfy $c_1 c_L>0$ as in Definition \ref{def:lin_recur}. \\

The main idea is to show that for a given $n\in \Z_+$, the uniform distribution on generalized Zeckendorf decompositions consisting of $n+1$ digits (that is, the $(n+1)$-th digit is non-vanishing and all higher digits are not present) coincides with the distribution of a certain conditioned Markov chain. This provides a unified framework for the model, which, in particular, gives rather easy access to many asymptotic results. We first define the Markov chain. Let $(X,Y) = \bigl((X_n,Y_n) :  n\in\Z_+\bigr)$ be the two-dimensional process with  $X_n  \in \{0,\dots ,\max_i c_i\}$ and $Y_n \in \{1,\dots ,L\}$. The idea is that $X_0,X_1,\dots$ will be used to represent the coefficients $a_i$ in \eqref{eq:decomposition}, while $Y_0,Y_1,\dots$ will be used to keep track whether the $X_n$'s satisfy the condition \eqref{eq:legalcondition2}. This will be explained below, after we finish describing our construction. Let $P$ denote the distribution under which this is  an IID process, $(X_0,Y_0)$ being uniformly distributed over $\{0,\dots,\max_i c_i\}\times \{1,\dots,L\}$.

\begin{defi}
\label{def:legal_real}
Suppose  $L\in \N$ and  $c_1,\dots,c_L\in \Z_+,~c_1c_L>0$ are the coefficients of a linear recursion. We say that the realization $\bigl((X_0,Y_0),(X_1,Y_1), \dots\bigr)$ of the process $(X,Y)$ is \textbf{legal} with respect to the recursion if
\begin{enumerate}
\item $X_0>0$ and $Y_0=1$,
\item there exists a random variable  $J \in \Z_+$ such that $X_J>0$, $X_n =0$ and $Y_n=1$ for $n> J$,
\item For all $n\in \N$, either
\begin{enumerate}
\item $X_n <c_{Y_n}$ and $Y_{n+1}=1$ or
\item $X_n= c_{Y_n}$ and $Y_n=Y_{n+1}+1$.
\end{enumerate}
\end{enumerate}
\end{defi}
Note that condition 3b and the assumption that $Y_n \in\{1,\dots, L\}$ for all $n$ implicitly mean that in a legal realization  $X_n = c_{Y_n}$ only if $Y_n < L$. \\

The main observation is the following. Given a legal realization and letting (compare to \eqref{eq:decomposition})
\be\label{eq:decomp2} N\ =\ \sum_{j=0}^n  X_j G_{n-j+1}, \ee then $(X_0,\dots,X_n)$ is the legal decomposition of $N \in [G_{n+1},G_{n+2})$, according to Definition \ref{def:legal}.

Let
\begin{equation}\tau \ = \ \inf \{n \in \Z_+: \left ( (X_0,Y_0) , (X_1,Y_1), \dots,(X_n,Y_n)\right) \mbox{ does not extend to a legal realization}\}.\end{equation}

With a slight abuse of notation, let $Q_n$ be the probability measure on the $\sigma$-algebra generated by $(X_0,Y_0),\dots,(X_n,Y_n)$ defined through
\begin{equation} Q_n (B) \ = \ P( B | \tau >n).\end{equation}
 Since $P$ is uniform, $Q_n$ is uniform over all finite realizations $(X_0,Y_0),\dots,(X_n,Y_n)$ that extend to legal realizations. Any such finite realization corresponds to a unique Zeckendorf decomposition of length $n+1$ given in \eqref{eq:decomp2}. Conversely, every integer with Zeckendorf decomposition of length $n+1$ corresponds to a unique finite realization $(X_0,Y_0),\dots,(X_n,Y_n)$ extending to a legal realization. Therefore $Q_n$ could be identified with the uniform distribution on generalized Zeckendorf decompositions of length $n+1$. \\

We now define an auxiliary process that allows us to introduce ideas on conditioned Markov chains. The reason for doing that is the following:  $\tau$ is not a hitting or even stopping time for $(X,Y)$, as in order to determine whether $\tau=n$, it is evident from Definition \ref{def:legal_real}(3b) that on certain circumstances  the value of $Y_{n+1}$ is needed. Therefore,  the probabilistic analysis of Markov chains through stopping times, and which is key to our approach,  cannot be applied.  To fix this, let $Z_n = (X_n,Y_n,Y_{n+1})$, and let $Z=(Z_n:n\in\Z_+)$. Below we will write $Z_n (1)$ for $X_n$, $Z_n(2)$ for $Y_n$ and $Z_n(3)$ for $Y_{n+1}$.  
 It is easy to see that $\tau$ is a hitting time for $Z$. Specifically, letting
\begin{align}
{\cal L} &\ = \ \{ (x,j,j'): (x < c_j \mbox{ and }j'=1) \mbox{ or } (j<L \mbox { and } x=c_j \mbox{ and } j'=j+1) \};\nonumber\\
\label{eq:L_0} {\cal L}_0&\ =\ {\cal L} \cap \{ (x,1,j'):x>0\},
\end{align}
 then
 \begin{equation}\twocase{\tau \ = \ }{0}{if $Z_0 \not \in {\cal L}_0$}{\inf \{n: Z_n \not \in {\cal L}\}}{otherwise.}\end{equation}

Under $P$, $Z$ is a Markov chain. We abuse notation and denote its transition function by $P$ as well. Since the measure $P$ is uniform, it immediately follows that  the restriction $P_{\cal L}$ of  the transition function $P$ to ${\cal L}\times {\cal L}$ is an irreducible and aperiodic substochastic matrix.  From the Perron-Frobenius theorem we  know that $P_{\cal L}$ possesses a Perron root $\lambda_c \in (0,1)$ and corresponding left and right eigenfunctions, $\nu_c$ and $\varphi_c$, respectively, whose entries are strictly positive.  We normalize them so that $\varphi_c$ and $\nu_c \varphi_c$ are probability measures. Let $Q$ be a stochastic  transition function on ${\cal L}\times{\cal  L}$ defined as follows:
\begin{equation} \label{eq:Q}
Q(z,z')\ =\ \frac{1}{\lambda_c\varphi_c(z) } P_{\cal L}(z,z') \varphi_c(z').
\end{equation}

Observe that $Q$ inherits irreducibility and being aperiodic  from $P_{\cal L}$. As a result, $Q$ is ergodic, and we denote its unique stationary distribution by $\pi^Q$. Recall that from the definition of a stationary distribution,  $\pi^Q Q = \pi^Q$, if $\pi^Q$ is considered as a row vector, and it immediately follows that
\begin{equation}
\label{eq:piQ}
\pi^Q(z) \ = \ \nu_c (z) \varphi_c(z).
\end{equation}
We also define the marginal of the first coordinate $\pi^Q_1$ by letting
\begin{equation} \pi^Q_1 (x) \ = \ \sum_{b,b'}  \pi^Q (x,b,b').\end{equation}

Next we fix  some notation. We write  $P_\mu$ for the distribution of the Markov chain $Z$ under $P$ with initial distribution $\mu$, and $E^P_\mu$ for the corresponding expectation. When $\mu$ is a point mass $\delta_z$, we  denote this with $z$ as a subscript  instead of the notationally correct but more cumbersome $\delta_z$. We also define the analogous expressions with $Q$ instead of $P$.


The following result identifies the uniform distribution $Q^n$ with the distribution of the Markov chain $Z$ under $Q$.
\begin{theorem}
\label{th:structure}
Let $f=f (Z_0,\dots,Z_n)$ be a complex-valued random variable. Then
 \begin{equation} E^{Q_n}( f )\ = \ \frac{ E_{\tilde \varphi_c}^Q \left ( \frac{f}{\varphi_c(Z_n)}\right) }{E_{\tilde \varphi_c}^Q\left ( \frac{1}{\varphi_c(Z_n)}\right)},\end{equation}
  where $\tilde\varphi_c$ is the probability measure given by $\varphi_c$ conditioned on ${\cal L}_0$ in \eqref{eq:L_0}.
\end{theorem}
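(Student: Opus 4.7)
The plan is a direct Doob $h$-transform computation. By definition $Q_n(\cdot)=P(\cdot\mid \tau>n)$, so for $f$ measurable with respect to $\sigma(Z_0,\dots,Z_n)$ I would write
\[
E^{Q_n}(f)\ =\ \frac{E^P\bigl(f\,\mathbf 1_{\tau>n}\bigr)}{P(\tau>n)},
\]
and reduce the proof to evaluating the numerator (the denominator follows by setting $f\equiv 1$ and the common prefactors cancel). Using the description of $\tau$ as the first exit from $\mathcal L$ (plus the initial requirement $Z_0\in\mathcal L_0$), the event $\{\tau>n\}$ is $\{Z_0\in\mathcal L_0,\ Z_1,\dots,Z_n\in\mathcal L\}$, so the numerator expands as a sum over paths $(z_0,\dots,z_n)$ with $z_0\in\mathcal L_0$ and $z_1,\dots,z_n\in\mathcal L$, weighted by $p_0(z_0)P_{\mathcal L}(z_0,z_1)\cdots P_{\mathcal L}(z_{n-1},z_n)$, where $p_0$ is the (uniform) initial distribution of $Z_0$ under $P$ and hence is a constant $c_0$ on all of $\mathcal L_0$.

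The key step is to invert \eqref{eq:Q} to obtain
\[
P_{\mathcal L}(z,z')\ =\ \lambda_c\,\frac{\varphi_c(z)}{\varphi_c(z')}\,Q(z,z'),
\]
so that the telescoping identity
\[
\prod_{k=0}^{n-1}P_{\mathcal L}(z_k,z_{k+1})\ =\ \lambda_c^{\,n}\,\frac{\varphi_c(z_0)}{\varphi_c(z_n)}\prod_{k=0}^{n-1}Q(z_k,z_{k+1})
\]
holds along any path staying in $\mathcal L$. Substituting this into the path sum for $E^P(f\,\mathbf 1_{\tau>n})$ converts the $P_{\mathcal L}$-weights into $Q$-weights, producing an overall factor of $c_0\lambda_c^{\,n}$ and transferring the ratio $\varphi_c(z_0)/\varphi_c(z_n)$ into the summand.

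At that point the factor $\varphi_c(z_0)\mathbf 1_{\mathcal L_0}(z_0)$ is, after normalizing by the constant $\varphi_c(\mathcal L_0):=\sum_{z\in\mathcal L_0}\varphi_c(z)$, exactly the probability measure $\tilde\varphi_c$. Absorbing the constant yields
\[
E^P\bigl(f\,\mathbf 1_{\tau>n}\bigr)\ =\ c_0\,\lambda_c^{\,n}\,\varphi_c(\mathcal L_0)\,E^Q_{\tilde\varphi_c}\!\left(\frac{f(Z_0,\dots,Z_n)}{\varphi_c(Z_n)}\right),
\]
and the identical computation with $f\equiv 1$ gives $P(\tau>n)=c_0\lambda_c^{\,n}\varphi_c(\mathcal L_0)\,E^Q_{\tilde\varphi_c}(1/\varphi_c(Z_n))$. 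Taking the ratio, the factor $c_0\lambda_c^{\,n}\varphi_c(\mathcal L_0)$ cancels and the claimed identity follows.

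I do not expect a genuine obstacle here; the content is just the standard $h$-transform bookkeeping. The only mildly delicate point is making sure the initial distribution is handled correctly: $P$ starts from the uniform measure on the ambient state space, so conditioning on the event $\{Z_0\in\mathcal L_0\}$ gives a distribution proportional to $\mathbf 1_{\mathcal L_0}$, which after the $\varphi_c(z_0)$ reweighting becomes $\tilde\varphi_c$. The unspecified constants of proportionality ($c_0$ and $\varphi_c(\mathcal L_0)$) appear identically in numerator and denominator, which is why the statement takes the clean ratio form.
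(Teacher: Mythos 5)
Your proposal is correct and follows essentially the same route as the paper: condition on $Z_0\in\mathcal L_0$, telescope the product of $P_{\mathcal L}$-transitions into $\lambda_c^n\,\varphi_c(z_0)/\varphi_c(z_n)$ times a product of $Q$-transitions, absorb $\varphi_c(z_0)\mathbf 1_{\mathcal L_0}$ into the measure $\tilde\varphi_c$, and cancel the common constant in the ratio. The paper phrases the path-sum in terms of the Markov property and $E^Q_{z_0}$ rather than an explicit sum over trajectories, but the computation is identical.
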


The theorem has a nice and simple interpretation in terms of the Markov chain corresponding to $Q$ pinned at time $n$. Specifically, if $D$ is a random variable on the same probability space as $Z$, independent of $Z$ and satisfying $Q(D=z) = \frac{c}{\varphi_c(z)}$, where $c$ is a normalizing constant to make the righthand side a probability mass  function, then we can restate the theorem as
\be  E^{Q_n} (f)\ =\ \frac{ E^{Q}_{\tilde \varphi_c} (f \ch_{\{ Z_n=D\}})}{Q_{\tilde \varphi_c} (Z_n =D)}.\ee

In other words, $Q^n$ is simply the distribution of $Q$ starting from $\tilde \varphi_c$, pinned at time $n$ to the randomly selected point $D$. Note that the dependence on $n$ is only through the time of the pinning, and this means that in order to study the sequence of probability measures $(Q_n)$, one only needs to study $Z$.

\begin{proof}[Proof of Theorem \ref{th:structure}]
Observe that if $z_0\in {\cal L}_0$ and $z_1,\dots,z_n  \in {\cal L}$, then
\begin{align}
P_{z_0}( \prod_{j=0}^{n} \{ Z_j = z_j\} , \tau> n) & \ = \  \prod_{j=0}^{n-1} P(z_j,z_{j+1})\nonumber\\
&\ = \ \lambda_c^n  \prod_{j=0}^{n-1} \varphi_c(z_j)Q(z_j,z_{j+1}) \frac{1}{\varphi_c(z_{j+1})} \nonumber\\
& \ = \ \lambda_c^n \varphi_c (z_0) Q _{z_0}( \prod_{j=0}^{n} \{ Z_j =z_j\} ) \frac{1}{\varphi_c (z_{n})},
\end{align}
and otherwise $P_{z_0}( \prod_{j=0}^{n} \{ Z_j = z_j\} , \tau> n)=0$. In particular, if $f=f(Z_0,\dots,Z_n)$ is a complex valued random variable, then
\begin{align} E^P ( f , \tau > n ) &\ = \ \sum_{z_0 \in {\cal  L}_0} E^P(f, \tau>n, Z_0=z_0) \ = \ \sum_{z_0  \in {\cal L}_0} E^P (\ch_{\{Z_0=z_0\} }  f(z_0,\dots,Z_n),\tau >n)\nonumber\\
& \ = \ \sum_{z_0\in {\cal L}_0} P(Z_0=z_0) E_{z_0}^P( f (Z_0,\dots,Z_n), \tau>n) \nonumber\\
& \ = \ \lambda_c^n \sum_{z_0\in {\cal L}_0} P(Z_0=z_0) \varphi_c (z_0)  E^Q_{z_0}  \left ( \frac{f }{\varphi_c(Z_n)}\right).
\end{align}
Since $P$ is uniform, it follows that $P(Z_0=z_0)$ is constant on ${\cal L}_0$, and the result follows. \end{proof}

Next we consider limits. The following provides sufficient conditions under which $Q_n$ expectations and expectations with respect to $Q$ are asymptotically equivalent.

\begin{prop}
\label{pr:asymp}
Suppose that for   $n\in\Z_+$,  $f_n (Z_0,\dots,Z_n)$ is a complex-valued random variable, and   $(j_n:n\in\Z_+)$ is a subsequence of $\Z_+$ such that
\begin{enumerate}
\item $\min (j_n, n -j_n) \to \infty$,
\item  $E_{\tilde \varphi_c} ^Q |f_n - f_{j_n}| \to 0$.
\end{enumerate}
Then
\begin{equation} | E^{Q_n} f_n - E^Q_{\tilde\varphi_c}  f_n | \ = \ o(1) \max (|E^Q_{\tilde\varphi_c} (f_n)|,1) .\end{equation}
\end{prop}

\begin{proof}
Because of condition (2), we have
 \begin{equation} E^{Q_n} \left ( f_n\right)  \ = \ \frac {E_{\tilde  \varphi_c}^Q \left (  \frac{f_{j_n} }{\varphi_c (Z_n)}\right)}{E_{\tilde \varphi_c}^Q\left (  \frac{1}{\varphi_c(Z_n)}\right)} + o(1).\end{equation}
 Then, by the Markov property,
 \begin{equation} E_{\tilde  \varphi_c}^Q \left (  \frac{f_{j_n} }{\varphi_c (Z_n)}\right) \ = \ E_{\tilde\varphi_c}^Q \left (  f_{j_n} E_{Z_{j_n}} \left ( \frac{1}{\varphi_c (Z_{n-j_n})}\right)\right).\end{equation}
  The ergodicity of $Z$ under $Q$ and the fact that $n-j_n\to\infty$ guarantee that $ E_{Z_{j_n}}^Q\left (  \frac{1}{\varphi_c (Z_{n-j_n})}\right)=E_{\pi^Q} \frac{1}{\varphi_c}  +o(1)=\|\nu_c\|_1+o(1)$. Thus
  \begin{eqnarray} E^{Q_n} (f_n) & \ = \ &  \frac{  (\|\nu_c\|_1 + o(1)) E_{\tilde \varphi_c}^Q ( f_{j_n})}{\|\nu_c\|_1 + o(1)}+o(1) \nonumber\\ & \ = \ & (1 + o(1)) E_{\tilde \varphi_c}^Q (f_{j_n} ) + o(1)\ = \ (1 + o(1))E_{\tilde \varphi_c}^Q (f_n) + o(1).\end{eqnarray}
\end{proof}

For applications, it would be useful to know more about $Q$.  It turns out that the underlying structure is determined by the matrix $C$, which we now describe. Let $C$ be the $L\times L$ matrix given by $C=(C_{i,j})$, $C_{i,1} = c_i$ and $C_{i,i+1} = 1$, and all other entries equal to $0$:
\be
\label{eq:C}
C\ =\
\left ( \begin{array}{ccccc}
 c_1 & 1 & 0 & \cdots \\
 c_2 & 0 & 1 & 0 & \cdots  \\
 \vdots & 0 & \cdots \\
 c_{L-1} & 0 & \dots & & 1\\
c_L & 0 & \dots  & & 0
\end{array} \right).
\ee
 Let $\lambda_C$ denote the Perron eigenvalue of $C$, $\varphi_C$ a corresponding positive right eigenvector and $\nu_C$ a corresponding left eigenvector.  A straightforward computation gives the following.

\begin{lemma}
\label{lem:char}
 Let $C$ be as in \eqref{eq:C}.  Then
 \begin{enumerate}
 \item the characteristic polynomial of $C$ is $  \lambda^L - \sum_{j=1}^L c_j \lambda^{L-j}$;
  \item  up to multiplicative constants: $\nu_C(b) = \lambda_C^{-b}$ and  $\varphi_C(b') = \lambda_C ^{b'} - \sum_{j=1}^{b'-1}c_j \lambda_C^{b'-j} $.
 \end{enumerate}
\end{lemma}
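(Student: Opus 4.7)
My plan is to verify each assertion by a direct computation that exploits the fact that the nonzero entries of $C$ sit only in the first column and on the superdiagonal, so that $C$ is essentially a (transposed) companion matrix.

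For part (1), I would set $D_L(\lambda):=\det(\lambda I - C)$ and expand along the bottom row of $\lambda I - C$, whose only nonzero entries are $-c_L$ in column $1$ and $\lambda$ in column $L$. The $(L,L)$-minor is itself a matrix of exactly the same form built from $c_1,\dots,c_{L-1}$, so its determinant equals $D_{L-1}(\lambda)$; the $(L,1)$-minor is lower triangular with $-1$'s on the diagonal (the $\lambda$'s of $\lambda I - C$ end up on the subdiagonal once column $1$ is deleted), hence has determinant $(-1)^{L-1}$. Tracking the cofactor signs yields
\begin{equation}
D_L(\lambda) \;=\; \lambda\, D_{L-1}(\lambda) - c_L,
\end{equation}
with base case $D_1(\lambda)=\lambda-c_1$. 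A one-line induction on $L$ then gives $D_L(\lambda)=\lambda^L-\sum_{j=1}^L c_j\lambda^{L-j}$, proving (1).

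For part (2), I would solve the two eigenvalue equations coordinate by coordinate. The left equation $\nu_C C=\lambda_C\nu_C$, read off in coordinate $j\ge 2$, reduces to $\nu_C(j-1)=\lambda_C\,\nu_C(j)$, which immediately forces $\nu_C(b)\propto\lambda_C^{-b}$; substituting back into the remaining $j=1$ coordinate reproduces the characteristic equation from part (1), so the formula is consistent. For the right eigenvector, the equation $C\varphi_C=\lambda_C\varphi_C$ in row $i<L$ yields the one-step recursion
\begin{equation}
\varphi_C(i+1) \;=\; \lambda_C\,\varphi_C(i) - c_i\,\varphi_C(1),
\end{equation}
and normalizing $\varphi_C(1)=\lambda_C$ gives the claimed closed form $\varphi_C(b')=\lambda_C^{b'}-\sum_{j=1}^{b'-1}c_j\lambda_C^{b'-j}$ by induction on $b'$. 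The last remaining equation (row $L$) $c_L\varphi_C(1)=\lambda_C\varphi_C(L)$ again reduces to the characteristic equation of part (1), hence is automatic.

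There is no serious obstacle here; the only points requiring care are the sign $(-1)^{L-1}$ in the $(L,1)$ cofactor for part (1) and the indexing of the one-step recursion for $\varphi_C$ in part (2). For completeness, positivity of the eigenvectors (needed for them to serve as Perron eigenvectors) is immediate for $\nu_C$, and for $\varphi_C$ follows by writing $\varphi_C(b')=\lambda_C^{b'}\bigl(1-\sum_{j=1}^{b'-1}c_j\lambda_C^{-j}\bigr)$ and using the characteristic equation $1=\sum_{j=1}^{L}c_j\lambda_C^{-j}$ to bound this below by $c_L\lambda_C^{b'-L}>0$.
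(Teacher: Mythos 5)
Your proof is correct and fills in exactly the ``straightforward computation'' that the paper itself omits: cofactor expansion along the last row for part (1), and solving the left/right eigenvalue equations coordinatewise with the one-step recursions $\nu_C(j-1)=\lambda_C\nu_C(j)$ and $\varphi_C(i+1)=\lambda_C\varphi_C(i)-c_i\varphi_C(1)$ for part (2). The sign bookkeeping in the $(L,1)$ cofactor and the positivity check via $1-\sum_{j=1}^{b'-1}c_j\lambda_C^{-j}=\sum_{j=b'}^{L}c_j\lambda_C^{-j}\ge c_L\lambda_C^{-L}>0$ are both handled correctly.
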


With this lemma we obtain a description of $Q$.

\begin{prop} Let $C$ be as in \eqref{eq:C}, and let $\lambda_c,\nu_c,\varphi_c$, respectively, be the Perron eigenvalue, and corresponding left and right eigenvectors for $P_{\cal L}$, the restriction of the transition function $P$ to ${\cal L}$, normalized so that $\varphi_c$ and $\nu_c\varphi_c$ are  probability distributions. Then:
\label{pr:quantities}
 \begin{enumerate}
 \item $\lambda_c =\frac{  \lambda_C}{(\max c_i +1)L}$.
 \item  There exist positive constants $K_1,K_2$ such that $\varphi_c (a,b,b') =K_1  \varphi_C (b')$ and $\nu_c(a,b,b') = K_2\nu_C(b)$. In particular, $ \pi^Q (a,b,b') = K_1 K_2 \nu_C(b) \varphi_C(b')$, and $K_1 K_2 = \frac{1}{\lambda_C \sum_{b=1}^L \nu_C(b) \varphi_C(b)}$.

\item  $ Q((a,b,b'),(a',b',b'') ) =  \frac{\varphi_C (b'')}{ \lambda_C\varphi_C (b')}$  for allowed transitions and is $0$ otherwise. \\
Furthermore, allowed transitions satisfy either of the following:
\begin{enumerate}
\item $b''= 1$ and then the probability of the transition is  $\frac{\varphi_C(1)}{\lambda_C\varphi_C(b')}$;
\item $b''=b'+1$ and then the probability of the transition is  $1- \frac{  \varphi_C(1) c_{b'}}{\lambda_C \varphi_C (b')}$.
\end{enumerate}
 \end{enumerate}
\end{prop}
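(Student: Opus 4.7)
The plan is a direct verification exploiting the extremely simple structure of $P_{\cal L}$. Since under $P$ the process is IID, the transition for the chain $Z_n=(X_n,Y_n,Y_{n+1})$ has the middle coordinate of $Z_{n+1}$ deterministically set to the third coordinate of $Z_n$, while $(X_{n+1},Y_{n+2})$ is uniform on $\{0,\dots,\max_i c_i\}\times\{1,\dots,L\}$. Consequently, for any $z=(a,b,b')$ and $z'=(a',b',b'')$ in ${\cal L}$ one has $P_{\cal L}(z,z')=1/[(\max_i c_i+1)L]$, while $P_{\cal L}(z,z')=0$ for all other pairs. The goal is then to lift the Perron data of $C$ to that of $P_{\cal L}$.

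With this in mind, I would posit the ansatz $\nu_c(a,b,b')=K_2\nu_C(b)$ and $\varphi_c(a,b,b')=K_1\varphi_C(b')$ and verify it. For the right eigenvector, $(P_{\cal L}\varphi_c)(a,b,b')$ reduces to $K_1/[(\max_i c_i+1)L]$ times the sum of $\varphi_C(b'')$ over $(a',b'')$ such that $(a',b',b'')\in{\cal L}$; by the definition of ${\cal L}$ this sum is $c_{b'}\varphi_C(1)+\varphi_C(b'+1)$ (the second term suppressed when $b'=L$), which is precisely $(C\varphi_C)(b')=\lambda_C\varphi_C(b')$. For the left eigenvector, $(\nu_c P_{\cal L})(a',b',b'')$ reduces to a sum of $\nu_C(b)$ over $(a,b)$ such that $(a,b,b')\in{\cal L}$; splitting by whether $b'=1$ (giving $\sum_b c_b\nu_C(b)=\lambda_C\nu_C(1)$) or $b'>1$ (giving the single term $\nu_C(b'-1)=\lambda_C\nu_C(b')$), both cases collapse to $\lambda_C\nu_C(b')$. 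Matching the constants in either calculation pins down $\lambda_c=\lambda_C/[(\max_i c_i+1)L]$, proving (1); the uniqueness (up to a positive scalar) of the Perron data for the irreducible aperiodic matrix $P_{\cal L}$ then identifies the eigenvectors claimed in (2).

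For the normalization $K_1K_2$ I would split $\sum_{(a,b,b')\in{\cal L}}\nu_C(b)\varphi_C(b')$ along the two legs of ${\cal L}$: the $\{a<c_b,\,b'=1\}$ leg contributes $\varphi_C(1)\sum_b c_b\nu_C(b)=\lambda_C\nu_C(1)\varphi_C(1)$, and the $\{b<L,\,a=c_b,\,b'=b+1\}$ leg contributes $\sum_{b=1}^{L-1}\nu_C(b)\varphi_C(b+1)=\lambda_C\sum_{b'=2}^L\nu_C(b')\varphi_C(b')$ after applying $\nu_C(b'-1)=\lambda_C\nu_C(b')$. The two contributions sum to $\lambda_C\sum_b\nu_C(b)\varphi_C(b)$, so $\pi^Q$ being a probability measure forces $K_1K_2=1/[\lambda_C\sum_b\nu_C(b)\varphi_C(b)]$. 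Part (3) is then immediate from \eqref{eq:Q}: the $K_1$ cancels and $\lambda_c(\max_i c_i+1)L$ collapses to $\lambda_C$, leaving $Q((a,b,b'),(a',b',b''))=\varphi_C(b'')/[\lambda_C\varphi_C(b')]$; the only values of $b''$ allowed by ${\cal L}$ once $b'$ is fixed are $b''=1$ and $b''=b'+1$, yielding (a) and (b) respectively, with (b) rewritten using $\varphi_C(b'+1)=\lambda_C\varphi_C(b')-c_{b'}\varphi_C(1)$. The only real care is in bookkeeping the boundary cases $b'=L$ (no type-(b) transition out) and $b'=1$ versus $b'>1$ when enumerating preimages; beyond that the argument is essentially a one-line computation per case, so no substantive obstacle is expected.
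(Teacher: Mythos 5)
Your proof is correct and follows essentially the same route as the paper: verify the ansatz $\varphi_c(a,b,b')=K_1\varphi_C(b')$ and $\nu_c(a,b,b')=K_2\nu_C(b)$ using the uniformity of $P$ and the two-leg structure of ${\cal L}$, read off $\lambda_c=\lambda_C/[(\max_i c_i+1)L]$, and obtain $K_1K_2$ by summing $\nu_C(b)\varphi_C(b')$ over ${\cal L}$. The only cosmetic difference is that in the normalization sum you collapse the second leg via the left-eigenvector relation $\nu_C(b-1)=\lambda_C\nu_C(b)$, whereas the paper regroups by $b$ and applies the right-eigenvector relation $c_b\varphi_C(1)+\varphi_C(b+1)=\lambda_C\varphi_C(b)$; both yield the same identity.
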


\begin{example}
\label{ex:zeck_quant}
For the standard Zeckendorf decomposition, we have:
\begin{enumerate}
\item $ C = \left ( \begin{array}{cc}  1 & 1 \\ 1 & 0 \end{array} \right)$. In particular,
\begin{enumerate}
\item The characteristic polynomial is $\lambda^2-\lambda -1$, and $\lambda_C=  \phi$, where $\phi$ is the golden ratio $\phi = \frac{1+\sqrt{5}}{2}$.
\item  $\nu_C (b) = \phi^{-b}$, and $\varphi_C(b') = \phi^{2-b'}$.
\end{enumerate}
\item ${\cal L} = \{(0,1,1),(0,2,1), (1,1,2)\}$.  Identifying these states as $1$, $2$ and $3$ in the order written, then
\begin{enumerate}
\item
$  Q = \left ( \begin{array}{ccc}  \frac{1}{\phi} & 0 & 1- \frac{1}{\phi}  \\
\frac{1}{\phi} & 0 & 1- \frac{1}{\phi}  \\  0  & 1 & 0  \end{array} \right)$,
\item  $\pi^Q(0,1,1) = \frac{\phi}{2+\phi},~\pi^Q(0,2,1) = \frac{1}{2+\phi} \pi^Q(1,1,2) = \frac{1}{2+\phi}$, and \\
$\pi^Q_1(0) = \frac{1+\phi}{2+\phi},~\pi^Q_1(1) = \frac{1}{2+\phi}$,
\item  $\varphi_c  = \frac{1}{2\phi+1} \left (\phi,\phi,1 \right)^t$, and 
\item $\nu_c = \frac{1}{\phi+2}\left (2\phi+1,\phi+1,2\phi+1\right)^t$. 
\end{enumerate}

\end{enumerate}
\end{example}

\begin{proof}[Proof of Proposition \ref{pr:quantities}]~\\

1.  The first part is a straightforward calculation. \\

2. Observe that for the row of $P$ corresponding to transition from $(a,b,b')$, we have exactly $|{\cal S}_1|\times |{\cal S}_2|=(\max_i c_i +1)L$ allowed sites to transition to, and due to the choice of uniform distribution,  all are of equal probability. As  $P$ is stochastic,  its nonzero entries are equal to $\gamma = \frac{1}{(\max c_i +1)L }$.  We first study the restriction $P_{\cal L}$  of  $P$  to ${\cal L}\times {\cal L}$. Recall that the elements of $\cal L$ are of the form $(x,k,1)$, where $x < c_k$ or $(c_k,k,k+1)$ where $k=1,\dots,L-1$. For each $(a,b,b')\in {\cal L}$, $P_{\cal L}$ has a corresponding row, listing all transitions from $(a,b,b')$. We will count the number of such non-zero entries according to the value of $b'$. If $b'\in \{1,\dots, L-1\}$ then there are $1+c_{b'}$ transitions: one to the site $(c_{b'},{b'},b'+1)$ and $c_{b'}$ to $(x,b',1)$ where $x \in \{0,\dots, c_{b'}-1\}$. If $b' = L$ then there are only $c_L$ allowed transitions, all of which are of the second kind.


We define a  function $\varphi$ on ${\cal L}$  by letting $\varphi (a,b,b') = \varphi_C (b')$. Fix $(a,b,b')\in A$. If $b' < L$, then according to the allowed transitions listed above, we have \begin{equation} P_{\cal L} \varphi (a,b,b') \ = \ \gamma ( \varphi_C (b'+1) + c_{b'} \varphi_C (1)) \ = \ \gamma (C \varphi_C )(b') \ = \ \gamma \lambda_C \varphi(a,b,b').\end{equation} Similarly, if $b'=L$, then $P_{\cal L} \varphi(a,b,L) = \gamma c_L \varphi_C (1) = \gamma \lambda_C \varphi(a,b,L)$. Thus $\gamma \lambda_C=\lambda_c$, the Perron root for  $P_{\cal L}$, and $\varphi$ is a corresponding positive eigenvector. Next we want to find the corresponding  left-eigenvector for  $P_{\cal L}$. To do that, let $D$ be the transpose of $C$, and let $\nu_C$ be a Perron eigenvector. Define $\nu_c (a,b,b') := \nu_C (b)$. If  $b \in \{2,\dots, L\}$,  then there is exactly one allowed transition to it, that is from $(c_{b-1},b-1,b)$. As a result, $\nu_c P_{\cal L} (a,b,b') = \gamma \nu_c (c_{b-1},b-1,b) = \gamma  (D \nu_C)(b) =\gamma \lambda_C  \nu_c (a,b,b')$.
Next, if $b=1$, then the allowed transitions are from $(x,k,1)$ where $k=1,\dots, L$ and $x \in \{0,\dots,c_k-1\}$. We obtain
$ \nu_c P_{\cal L} (a,1,b') = \gamma \sum_{k=1}^L c_k \nu_C (k)  = \gamma (D \nu_C)(1)  =\gamma \lambda_C \nu_c (a,1,b')$.\\

The formula for $\pi^Q$
follows directly from \eqref{eq:piQ} and the preceding identities, while the formula for $K_1K_2$ follows from the calculation below.
\begin{align} \sum_{a,b,b'} \pi^Q(a,b,b') & \ = \  \sum_{a,b} \pi^Q (a,b,1) + \sum_{a,b} \pi^Q( a,b,b+1) \nonumber\\
 & \ = \ K_1K_2 \left(  \sum_{b=1}^{L}  c_b \nu_C (b) \varphi_C(1)+ \sum_{b=1}^{L-1} \nu_C(b) \varphi_C(b+1)\right)\nonumber\\
 & \ = \ K_1K_2 \sum_{b=1}^{L}  \nu_C (b) \left (  c_b \varphi_C(1) + \varphi_C(b+1)\right) \nonumber\\
& \ = \ K_1 K_2  \lambda_C \sum_{b=1}^L \nu_C(b) \varphi_C (b).
 \end{align}

\noindent 3. This follows from \eqref{eq:Q} and parts 1. and 2. \end{proof}

\section{Additive functionals}
\label{sec:additive}

\subsection{General Theory}\label{sec:additive_theory}
In this section we will study some theoretical aspects of large-time behavior of additive functionals of an ergodic finite-state Markov chain, under a change of measure which generalizes the way $Q_n$ was obtained from $Q$. The assumptions in this section are the following:

\begin{defi}
\label{def:additive}
Let $Z=(Z_n:n\in\Z_+)$ be an irreducible and aperiodic  Markov chain on the finite state space ${\cal L}$ with transition function $Q$.
Let $\varphi:{\cal L} \to (0,\infty)$ be a positive function, and let $\mu$ be a probability distribution on ${\cal L}$. For every $n\in\Z_+$, let $Q_n$ be a probability measure on $\sigma (Z_0,\dots,Z_n)$ given by
\be Q_n (A)\ =\ \frac{E^Q_{\mu} \left (   \frac{\ch_A}{\varphi(Z_n)}\right)}{E^Q_{\mu}\left (  \frac{1}{\varphi(Z_n)}\right)},~A \in \sigma (Z_0,\dots,Z_n).\ee
\end{defi}
We will consider the behavior of  additive functionals of the form $S_n = \sum_{j=0}^n g (Z_j)$ where $g :{\cal L} \to \C$ under $Q_n$ as $n\to\infty$. In the context of generalized Zeckendorf decompositions, an example for an additive functional is the number of, say, nonzero digits in the decomposition. In the next section, we show that gaps in the decomposition can be viewed as additive functionals of some Markov chain, so we can treat them with the same tools. \\

We need to fix some notation. Functions on ${\cal L}$ will interchangeably be viewed as column vectors. As an example, if  $g$ is such a function then $Qg$ is to be identified as the function or, equivalently the column $f$ vector given by $f(z) = \sum_{z' \in {\cal L}} Q(z,z') g(z')$. We will  write $hg$ for the product of such two functions, namely $hg$ is the function given by $(hg)(z) = h(z) g(z),~z\in {\cal L} $. In addition, $h (Qg)$ means the product of the function $h$ and the function $Qg$, not their scalar product. \\

Let $\pi^Q$ denote the stationary distribution for $Q$. Recall that $I-Q$ is invertible on the $Q$-invariant subspace of  $V$, where $V =\{g:E_{\pi^Q}  g(z)=0\}$. We denote this inverse by $Q^{\#}$, and extend it to all functions by letting $Q^{\#} {\ch} =0$. This is the only choice that guarantees that $Q$ and $Q^\#$ commute, and $Q^\#$ is known as the group inverse of $Q$.  It is well-known that
\begin{equation}
\label{eq:Qsharp}
 \sum_{j=0}^\infty E_z^{Q} (g(Z_j) -E_{\pi^Q} g)\ = \ (Q^\# g) (z).
\end{equation}

Our first result is the following.

  \begin{theorem}  \label{th:additive_functionals}
Let $g:{\cal L} \to \C$. Let $\tilde g= g - E_{\pi^Q} g$,  and  $\tilde S_n = \sum_{j=0}^{n} \tilde g(Z_j) $. Then
  \begin{eqnarray} \label{eq:lekkerkerker2} E^{Q_n}\tilde S_n &\ = \ & E_{\mu} (Q^\#  g) + \frac{ E_{\pi^Q} \tilde g  (Q^\# \frac{1}{\varphi})}{E_{\pi^Q} \frac{1}{\varphi} } + o(1) \end{eqnarray}
  \begin{eqnarray}
 \label{eq:sec_mom} E^Q_{\pi^Q} \tilde S_n^2 &\ = \ & (n+1)E_{\pi^Q} \left (\tilde g ((2Q^\# - I) \tilde g)\right)+o(1)\mbox{ and } E^{Q_n} \tilde S_n^2\ =\ (1+o(1))E^Q_{\pi^Q} \tilde S_n^2.\ \ 
 \end{eqnarray}
  \end{theorem}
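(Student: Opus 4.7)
The plan is to mimic the Markov-peeling calculation from the proof of Theorem \ref{th:structure} and then push the resulting expressions to their stationary limits using the exponential ergodicity of the finite, irreducible, aperiodic chain $(Z_n)$ under $Q$. The single analytic tool used throughout is the spectral-gap bound $\|Q^k f - E_{\pi^Q}f\cdot\ch\|_\infty \le C\rho^k\|f\|_\infty$ for some $\rho\in(0,1)$; this bound both guarantees the convergence of the series in \eqref{eq:Qsharp} defining $Q^\#$ and provides quantitative error control. Throughout I shall write $w := 1/\varphi - E_{\pi^Q}(1/\varphi)$, a mean-zero function on ${\cal L}$ satisfying $\sum_{k\ge 0}Q^k w = Q^\#(1/\varphi)$.

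For the first identity, Definition \ref{def:additive} and linearity of expectation give
\[
E^{Q_n}\tilde S_n \;=\; \frac{\sum_{j=0}^{n} E^Q_{\mu}\!\bigl(\tilde g(Z_j)/\varphi(Z_n)\bigr)}{E^Q_{\mu}\bigl(1/\varphi(Z_n)\bigr)}.
\]
The Markov property at time $j$ rewrites each numerator term as $E^Q_{\mu}\bigl(\tilde g(Z_j)\cdot(Q^{n-j}(1/\varphi))(Z_j)\bigr)$, and substituting $Q^{n-j}(1/\varphi) = E_{\pi^Q}(1/\varphi)\cdot\ch + Q^{n-j}w$ splits the sum into two pieces. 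The constant piece equals $E_{\pi^Q}(1/\varphi)\sum_{j=0}^n (Q^j\tilde g)(\mu)$; by \eqref{eq:Qsharp} together with $Q^\#\ch=0$, this is $E_{\pi^Q}(1/\varphi)\cdot E_\mu(Q^\# g) + o(1)$. The fluctuating piece, after the change of variable $k=n-j$, becomes $\sum_{k=0}^n E^Q_\mu\bigl(\tilde g(Z_{n-k})\cdot(Q^kw)(Z_{n-k})\bigr)$; the exponential decay of $Q^kw$ supplies a summable dominator, ergodicity replaces the expectation by $E_{\pi^Q}(\tilde g\cdot Q^kw)$ up to $o(1)$ for each $k$, and the sum limits to $E_{\pi^Q}(\tilde g\cdot Q^\#(1/\varphi))$. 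Dividing by the denominator $E^Q_\mu(1/\varphi(Z_n)) = E_{\pi^Q}(1/\varphi)+o(1)$ yields \eqref{eq:lekkerkerker2}.

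For the variance under $\pi^Q$, I would expand $\tilde S_n^2$ and use stationarity:
\[
E^Q_{\pi^Q}\tilde S_n^2 \;=\; (n+1)E_{\pi^Q}(\tilde g^2) + 2\sum_{k=1}^n (n+1-k)\,E_{\pi^Q}(\tilde g\cdot Q^k\tilde g).
\]
The identity $E_{\pi^Q}(\tilde g(2Q^\#-I)\tilde g) = E_{\pi^Q}(\tilde g^2) + 2\sum_{k\ge 1}E_{\pi^Q}(\tilde g\cdot Q^k\tilde g)$ follows from $Q^\#\tilde g = \sum_{k\ge 0}Q^k\tilde g$, and the spectral-gap bound controls both the missing tail $\sum_{k>n}$ and the arithmetic correction $\sum_k k\,E_{\pi^Q}(\tilde g\cdot Q^k\tilde g)$, yielding the asserted leading order up to a bounded remainder. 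For the ratio statement $E^{Q_n}\tilde S_n^2 = (1+o(1))E^Q_{\pi^Q}\tilde S_n^2$, I would apply the same Markov-peeling to the double sum defining $E^Q_\mu(\tilde S_n^2/\varphi(Z_n))$, peeling at the larger of the two time indices. Two applications of the spectral-gap estimate, together with the observation that $E^Q_\mu\tilde S_n^2$ differs from $E^Q_{\pi^Q}\tilde S_n^2$ by only $O(1)$, show the numerator equals $E_{\pi^Q}(1/\varphi)\cdot E^Q_{\pi^Q}\tilde S_n^2 + O(1)$; the denominator is again $E_{\pi^Q}(1/\varphi)+o(1)$, and dividing gives $E^{Q_n}\tilde S_n^2 = E^Q_{\pi^Q}\tilde S_n^2 + O(1)$, which becomes $(1+o(1))E^Q_{\pi^Q}\tilde S_n^2$ whenever the asymptotic variance $E_{\pi^Q}(\tilde g(2Q^\#-I)\tilde g)$ is positive.

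The main obstacle will be careful bookkeeping of three simultaneous sources of $o(1)$ correction: (i) the initial distribution $\mu\ne \pi^Q$, relevant for time indices near $0$; (ii) the weight $1/\varphi(Z_n)$ has not equilibrated, relevant for indices near $n$; and (iii) the infinite series defining $Q^\#$ must be truncated at $n$. All three decay exponentially away from the boundaries and are controlled by the common spectral-gap rate $\rho$, but matching the surviving finite constants to the clean $Q^\#$-form requires keeping them separate until the final accounting. A subsidiary technical point is that in the off-diagonal variance sum the naive bound $|(Q^{n-j}w)(Z_j)|\le C\rho^{n-j}$ must be sharpened using mean-zeroness of $\tilde g$ at the inner index to obtain a product-of-exponentials estimate, without which the error would scale like $O(n)$ rather than $O(1)$.
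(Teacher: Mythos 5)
Your overall strategy — peel with the Markov property, split off the stationary mean of $1/\varphi$, and then drive the remainder to zero with the spectral-gap/exponential-ergodicity bound — is exactly the paper's strategy for \eqref{eq:lekkerkerker2}, and the identities you use (in particular $Q^\#\tilde g=\sum_{k\ge0}Q^k\tilde g$ and the expansion of $E_{\pi^Q}(\tilde g(2Q^\#-I)\tilde g)$) are the same ones. The only cosmetic difference in the first part is that you split $Q^{n-j}(1/\varphi)$ into its constant and fluctuating pieces, whereas the paper splits the time sum at $n/2$; these are two ways of organizing the same estimate.

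Where you diverge meaningfully is the final equality $E^{Q_n}\tilde S_n^2=(1+o(1))E^Q_{\pi^Q}\tilde S_n^2$. The paper's published argument at that point is actually incomplete: it decomposes $\tilde S_n^2=\tilde S_m^2+2\tilde S_m(\tilde S_n-\tilde S_m)+(\tilde S_n-\tilde S_m)^2$ with $m\asymp\ln n$ and uses Cauchy--Schwarz, but this only yields $E^Q_\mu\tilde S_n^2=(1+o(1))E^Q_{\pi^Q}\tilde S_n^2$, and the transfer from $E^Q_\mu$ to $E^{Q_n}$ is never carried out (Proposition \ref{lem:asymp} cannot be invoked directly for $f_n=\tilde S_n^2$, since that functional grows like $n$ and condition (2) fails). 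Your route — peel $E^Q_\mu(\tilde S_n^2/\varphi(Z_n))$ directly, treating it as a double sum — addresses precisely this transfer, and your flagged ``subsidiary technical point'' is the crux of why it works: the naive bound $\sum_{j\le k}\rho^{n-k}$ gives $O(n)$, but peeling again at the inner index $j$ after centering $\tilde g\cdot Q^{n-k}w$ against its stationary mean yields the product bound $O(\rho^{j}\rho^{n-k})+O(\rho^{k-j}\rho^{n-k})$, whose double sum is $O(1)$. That is the right computation and it closes the gap cleanly. So: part one, same approach; part two, first equality same approach (both you and the paper obtain an $O(1)$ remainder — the paper's stated $o(1)$ is really $O(1)$, which is harmless for the conclusion); second equality, a genuinely different and in fact more complete argument than what the paper prints. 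Your observation that the final $(1+o(1))$ presupposes $\sigma^2>0$ is also correct and worth keeping.
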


  \begin{proof}
We will first prove \eqref{eq:lekkerkerker2}. From Theorem \ref{th:structure}  with $f= \tilde S_n$, and the Markov property, we have that
\begin{align} E^Q_{\mu} \frac{1}{\varphi(Z_n)}\times   E^{Q_n} \tilde S_n & \ = \ \sum_{j=0}^n E^Q_{\mu} \tilde g (Z_j) E_{Z_j} \frac{1}{\varphi(Z_{n-j})}\nonumber\\
 & \ = \ \usb{(I)}{\sum_{j=0}^n E^Q_{\mu} \tilde g (Z_j)\left (  E_{Z_j}^Q \frac{1}{\varphi(Z_{n-j})} - E_{\pi^Q} \frac{1}{\varphi}\right)}\nonumber\\
   &\ \ \ \ \ \ \ +\ E_{\pi^Q} \frac{1}{\varphi}\usb{(II)}{ \sum_{j=0}^n  E^Q_{\mu} \tilde g (Z_j) }.
   \end{align}
   By \eqref{eq:Qsharp}, $(II)\to E_{\mu} (Q^\# \tilde g)=E_{\mu} Q^\# g$, because $Q^\#$ maps constant function to $0$.  In order to estimate $(I)$, we recall that from the exponential ergodicity of irreducible finite state Markov chains, there exists $\rho \in(0,1)$ and $c_1>0$, such that for every function $h$ and $k\in \Z_+$,
   \begin{equation}
   \label{eq:exp_ergo}  \sup_{z} | E_z^Q h (Z_k) - E_{\pi^Q} h|\ \le\ c_1 \|h\|_\infty \rho^k.
   \end{equation}

Letting $h(z) = \frac{1}{\varphi(z)}-E_{\pi^Q} \frac{1}{\varphi}$, we have that $E_{\pi^Q} h =0$. This allows us to rewrite $(I)$ as $ \sum_{j=0}^n E_{\mu} \tilde  g (Z_j) E_{Z_j} h (Z_{n-j})$.
    In order to estimate this sum, we break it into two parts. First
    \begin{equation}\left|\sum_{j=0}^{\lfloor n/2 \rfloor } E_{\mu}^Q \tilde g (Z_j)  E_{Z_j} h (Z_{n-j}) \right| \ \le \  \|\tilde g\|_\infty \sum_{j=0}^{\lfloor n/2 \rfloor } \sup_{z}  |E_{z} h (Z_{n-j}) | \ \le \  c_1\|\tilde g\|_\infty \|h\|_\infty \rho^{n/2} n/2\to 0,\end{equation}
    where the last inequality follows from \eqref{eq:exp_ergo}.
    Next, let $h_{k}(z) = \tilde g(z) E_z h (Z_{k})$. Then
    \begin{equation} \sum_{j={\lfloor n/2 \rfloor +1}}^n E_{\mu}^Q \tilde g(Z_j) E_{Z_j} \frac{1}{\varphi(Z_{n-j})} \ = \  \sum_{j={\lfloor n/2 \rfloor +1}}^n  E_{\mu}^Q  h_{n-j} (Z_j) .\end{equation}
    Applying \eqref{eq:exp_ergo} to each of the functions  $h_{k}$, and observing that $\|h_k\|_\infty \le \|\tilde g\|_\infty \|h\|_\infty$, it follows that   for $j\ge \lfloor n/2 \rfloor +1$,
    \begin{equation}|E_{\mu}^Q  h_k (Z_j) - E_{\pi^Q} h_k  |\ \le \  c_1 \|\tilde g\|_\infty\|h\|_\infty\rho^{n/2}.\end{equation}
    Also, since $\pi^Q$ is the stationary distribution for $Q$, we have that $ E_{\pi^Q} h_k = E_{\pi^Q}^Q h_k(Z_j)$, and as a result
    \begin{equation}\sum_{j={\lfloor n/2 \rfloor +1}}^n \left ( E_{\mu}^Q  h_{n-j} (Z_j) - E_{\pi^Q}^Q   h_{n-j} (Z_j) \right)\ \le \  c_1 \|\tilde g\|_\infty\|h\|_\infty  \rho^{n/2}n/2 \to 0.\end{equation}
    In addition, $E_{\pi^Q}^Q h_{n-j} (Z_j) = E_{\pi^Q} \tilde g(Z_0) E_{Z_0} h (Z_{n-j})$, and therefore
     \begin{equation}\sum_{j=\lfloor n/2 \rfloor +1}^n E_{\pi^Q}^Q   h_{n-j} (Z_j) \ = \ \sum_{k=0}^{n- \lfloor n/2 \rfloor -1} E_{\pi^Q}^Q \tilde g (Z_0) E_{Z_0} h (Z_k).\end{equation}
     Since by our choice  $E_{\pi^Q} h = 0$, it follows from \eqref{eq:Qsharp} that the righthand side is equal to $E_{\pi^Q}  \tilde g (Q^\# h)+o(1)$.
    As a result, $ (I) = E_{\pi^Q} \tilde g (Q^\#  \frac{1}{\varphi})+ o(1)$, completing the proof of \eqref{eq:lekkerkerker2}. \\

    We turn to proving \eqref{eq:sec_mom}.  We first prove the first equality.
  \begin{align}
  E_{\pi^Q}^Q \left ( \tilde S_n^2  \right)&\ = \ \sum_{j=0}^n E_{\pi^Q}^Q \tilde g ^2 (Z_j) + 2 \sum_{0\le j < k\le n} E_{\pi^Q} \tilde g(X_j) \tilde g (X_k)\nonumber\\
  & \ = \ (n+1)E_{\pi^Q}^Q \tilde g ^2 + 2 \sum_{0\le j < k\le n} E_{\pi^Q}^Q \tilde g(X_0) E_{X_0}^Q \tilde g (X_{k-j})\nonumber\\
  & \ = \ -(n+1)E_{\pi^Q} \tilde g^2 + 2 \sum_{j=0}^n \sum_{k=j}^n  E_{\pi^Q}^Q \tilde g(X_0) E_{X_0}^Q \tilde g (X_{k-j}) \nonumber\\
  & \ = \ - (n+1) E_{\pi^Q} \tilde g ^2 + 2 \sum_{j=0}^n E_{\pi^Q}^Q \tilde g(X_0)\left (  \sum_{k=0}^{n-j} E_{X_0}^Q \tilde g (X_k)\right) \nonumber\\
  & \ = \ -(n+1) E_{\pi^Q} \tilde g^2 + 2 \sum_{j=0}^n E_{\pi^Q} \tilde g Q^\# \tilde g - 2 \usb{(*)}{\sum_{j=0}^n E_{\pi^Q}^Q \left ( \tilde g (X_0) \sum_{k > n-j} E^Q_{X_0} \tilde g (X_k)\right)}\nonumber\\
  & \ = \ (n+1) E_{\pi^Q} \tilde g (2Q^\#-I) \tilde g + (*).
  \end{align}
  Observe that by exponential ergodicity, \eqref{eq:exp_ergo}, $|E_z^Q \tilde g (X_k) |\le c_1 \|\tilde g\|_\infty  \rho^k$, uniformly over $z$, and so
  \begin{equation}|(*)| \ \le\ c_1 \|\tilde g\|_\infty^2 \sum_{j=0}^n \frac{\rho^{n-j+1}}{1-\rho} \ \le\ c_1 \|\tilde g\|_\infty^2  \frac{1}{(1-\rho)^2}\ =\ O(1).\end{equation}
  This completes the proof of the first equality in \eqref{eq:sec_mom}. It remains to the asymptotic equivalence of $E^Q_{Q_n} \tilde S_n^2$ and $E^Q_{\mu} \tilde S_n^2$. This, again, follows from the exponential ergodicity, as we now explain. We have
  \be \tilde S_n^2\ = \ \tilde S_m^2 + 2 \tilde S_m (\tilde S_n -\tilde S_m) + (\tilde S_n -\tilde S_m)^2.\ee
  From the Markov property and exponential ergodicity \eqref{eq:exp_ergo}, it follows that
  \be |E_{\mu} (S_n -S_m)^2 -E_{\pi^Q} \tilde S_{n-m}^2 |\ \le\ c_1 \|\tilde g\|_\infty n^2 \rho^{m}.\ee
Choose $m = c \ln n$ for $c=4/\ln (1/\rho) $. It follows  that righthand side tends to $0$ as $n\to\infty$. In particular,    $E_{\mu} (S_n -S_m)^2 \le c_2 n$.
Next, observe that
  $ E_{\mu} \tilde S_m^2 \le \|\tilde g\|_\infty^2 m^2$, and by Cauchy-Schwarz,
  $|E_{\mu} \tilde S_m (\tilde S_n -\tilde S_m)| \le\sqrt{  E_{\mu} \tilde S_m^2 }\sqrt{E_{\mu} (\tilde S_n -\tilde S_m)^2}\le c_3 m \sqrt{n}$.
  In summary, for all $n$ large enough,
  \be |E_{\mu}\left (  \tilde S_m^2 + 2 \tilde S_m (\tilde S_n -\tilde S_m)\right) |\ \le\ c_4 (\ln n)^2   \sqrt{n}\ \le\ c_4 n^{3/4}.\ee
   In particular,
   \be |E_{\mu} \tilde S_n^2 - E_{\pi^Q}^Q \tilde S_{n-m}^2 |\ \le\ c_4 n^{3/4},\ee
   so that
   \be E_{\mu} S_n^2 \ =\ (1 + o(1)) n E_{\pi^Q}  \tilde g (2Q^\#-I) \tilde g,\ee
    and the claim is proved.  \end{proof}

 We turn to laws of large numbers and central limit theorems for additive functionals.

 \begin{theorem}~
 Under the same assumptions of Theorem \ref{th:additive_functionals} we have:
  \label{th:LLN_CLT}
\begin{enumerate}
\item  Weak Law of Large Numbers:  For $\epsilon>0$, $\lim_{n\to\infty} Q_n \left (|\frac{ \tilde S_n }{n+1} |> \epsilon \right)=0$. \item Central Limit Theorem:
  $ Q_n \left( \frac{\tilde S_n}{\sqrt{n+1}} \le x~\right) \Rightarrow  P( Y \le x)$ where $Y\sim N(0,\sigma^2)$,
  and  $\sigma^2 =  E_{\pi^Q} \tilde g ((2Q^\# -I)\tilde g)$.
    \end{enumerate}
 \end{theorem}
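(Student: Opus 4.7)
This follows from Chebyshev's inequality applied to the second moment estimate already established. Since
$$Q_n\bigl(|\tilde S_n/(n+1)| > \epsilon\bigr)\ \le\ \frac{E^{Q_n}(\tilde S_n^2)}{\epsilon^2(n+1)^2},$$
and \eqref{eq:sec_mom} gives $E^{Q_n}(\tilde S_n^2) = O(n)$, the bound is $O(1/n)\to 0$.

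\textbf{Part (2): Strategy for the Central Limit Theorem.} The plan has two stages. First I would prove the CLT for $\tilde S_n/\sqrt{n+1}$ under $Q^\mu$ by classical Markov-chain methods, and then transfer it to $Q_n$ via a truncation argument powered by exponential ergodicity. For the first stage I would use the Gordin/Poisson-equation decomposition: with $h = Q^\# g$, the identity $\tilde g = h - Qh$ yields
$$\tilde S_n\ =\ h(Z_0) - h(Z_{n+1}) + M_n,\qquad M_n\ =\ \sum_{j=0}^n \bigl(h(Z_{j+1}) - Qh(Z_j)\bigr).$$
Here $M_n$ is a martingale with bounded, stationary increments under $Q^{\pi^Q}$, so the martingale CLT gives $M_n/\sqrt{n+1}\Rightarrow N(0,\sigma^2)$ with $\sigma^2 = E_{\pi^Q}[(h-Qh)^2]$; a short expansion using $Q^\#(I-Q) = I - \Pi_{\pi^Q}$ identifies this with $E_{\pi^Q}[\tilde g(2Q^\# - I)\tilde g]$. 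The boundary terms are $O(1) = o(\sqrt{n+1})$, and exponential ergodicity transfers the limit to an arbitrary initial distribution $\mu$.

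\textbf{Transfer to $Q_n$.} For a bounded continuous test function $f$, Definition \ref{def:additive} gives
$$E^{Q_n} f(\tilde S_n/\sqrt{n+1})\ =\ \frac{E^Q_\mu\bigl[f(\tilde S_n/\sqrt{n+1})/\varphi(Z_n)\bigr]}{E^Q_\mu[1/\varphi(Z_n)]}.$$
Pick a sequence $j_n$ with $j_n/n \to 1$ and $n - j_n \to \infty$, say $j_n = n - \lfloor\sqrt{n}\rfloor$. Since $\operatorname{Var}_{Q^\mu}(\tilde S_n - \tilde S_{j_n}) = O(n - j_n) = o(n)$, uniform continuity and boundedness of $f$ allow replacement of $f(\tilde S_n/\sqrt{n+1})$ by $f(\tilde S_{j_n}/\sqrt{n+1})$ inside the numerator at cost $o(1)$. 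The Markov property then yields
$$E^Q_\mu\bigl[f(\tilde S_{j_n}/\sqrt{n+1})/\varphi(Z_n)\bigr]\ =\ E^Q_\mu\Bigl[f(\tilde S_{j_n}/\sqrt{n+1})\cdot E^Q_{Z_{j_n}}\bigl[1/\varphi(Z_{n-j_n})\bigr]\Bigr],$$
and the exponential ergodicity bound \eqref{eq:exp_ergo} forces $E^Q_z[1/\varphi(Z_{n-j_n})] = E_{\pi^Q}[1/\varphi] + O(\rho^{n-j_n})$ uniformly in $z$. The denominator is handled identically, so the ratio collapses to $E^Q_\mu f(\tilde S_{j_n}/\sqrt{n+1}) + o(1)$. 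Finally, $\sqrt{j_n/(n+1)}\to 1$ and Slutsky's theorem upgrade the CLT from the first stage to $\tilde S_{j_n}/\sqrt{n+1}\Rightarrow N(0,\sigma^2)$ under $Q^\mu$, and dominated convergence closes the argument.

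\textbf{Main obstacle.} The subtle point is that $Q_n$ is not itself a Markov measure: the Doob-style tilt by $1/\varphi(Z_n)$ couples the terminal state to the entire trajectory, so the Markov CLT cannot be invoked directly on $Q_n$. The truncation at time $j_n$ must simultaneously satisfy $n - j_n \to \infty$, so that exponential ergodicity decouples the weight $1/\varphi(Z_n)$ from the truncated functional $\tilde S_{j_n}$, and $n - j_n = o(n)$, so that $\tilde S_{j_n}$ retains the full Gaussian fluctuations of $\tilde S_n$. Verifying quantitatively that this balance delivers the required $o(1)$ estimates (in particular that the $L^2$-error from truncation and the geometric ergodicity rate compete correctly) is the real technical content of the argument.
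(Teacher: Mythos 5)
Your Part (1) coincides with the paper's proof (Chebyshev plus the second-moment estimate from Theorem \ref{th:additive_functionals}). For Part (2), you take a genuinely different route. The paper applies its Proposition \ref{lem:asymp} directly to the characteristic function $f_n = \exp(i\theta\tilde S_n/\sqrt{n+1})$ with $j_n = n - \lfloor\ln n\rfloor$, and then simply \emph{cites} the CLT for additive functionals of finite-state Markov chains (Maxwell--Woodroofe, Ben-Ari--Neumann) to evaluate $E^Q_\mu f_n$. You instead re-prove that CLT from scratch via the Poisson-equation/martingale (Gordin) decomposition, and you re-derive the transfer to $Q_n$ for a bounded continuous test function rather than invoking Proposition \ref{lem:asymp}. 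Both are valid; yours is more self-contained, while the paper's is leaner because it offloads the Markov CLT to the literature and reuses a proposition already proved for exactly this purpose. Your conceptual remark about $Q_n$ failing to be Markov (the $1/\varphi(Z_n)$-tilt couples the endpoint to the path) is exactly the obstacle the paper's Proposition \ref{lem:asymp} is designed to neutralize.

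One slip worth correcting: the martingale increments are $D_j = h(Z_{j+1}) - Qh(Z_j)$, so the increment variance under stationarity is
\begin{equation}
\sigma^2 \ = \ E_{\pi^Q}\!\left[(h(Z_1) - Qh(Z_0))^2\right] \ = \ E_{\pi^Q}[h^2] - E_{\pi^Q}[(Qh)^2],
\end{equation}
and \emph{not} $E_{\pi^Q}[(h-Qh)^2]$, which equals $E_{\pi^Q}[\tilde g^2]$ and is only one of the two terms. Expanding $h^2 - (Qh)^2 = (h-Qh)(h+Qh) = \tilde g(2h - \tilde g) = \tilde g(2Q^\# - I)\tilde g$ (using $Q^\# g = Q^\# \tilde g$) does land on the stated $\sigma^2 = E_{\pi^Q}[\tilde g(2Q^\#-I)\tilde g]$, so your final answer is right; only the intermediate formula as written is off.
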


\begin{proof}[Proof of Theorem \ref{th:LLN_CLT}]
The Weak Law of Large Numbers follows from Chebychev's inequality and the asymptotic estimate for $E^{Q_n} \tilde S_n^2$ given in Theorem \ref{th:additive_functionals}:
\be Q_n\left  (|\frac{ \tilde S_n}{n+1} |\ > \ \epsilon\right)\ \le\ \frac{ E^{Q_n} \tilde S_n^2 }{(n+1)^2 \epsilon^2} \ = \ \frac{E_{\pi^Q} \tilde g (2Q^\# -I) \tilde g }{(n+1)\epsilon^2}\to 0,\mbox{ as } n\to\infty.\ee

We now prove the  Central Limit Theorem. To do this we apply Proposition \ref{pr:asymp} with $j_n = n- \lfloor \ln n\rfloor $ and
   \begin{equation} f_n \ = \ \exp\left (  \frac{i \theta}{\sqrt{n+1}}  \tilde S_n \right).\end{equation}
   Observe that the choice of $j_n$ guarantees that condition 1. in the proposition holds. Next,
\begin{eqnarray} E^Q_z |f_n -  f_{j_n}  | & \ \le \ & E_z^Q  | 1- E_{Z_{j_n}}^Q e^{\frac{ i \theta}{\sqrt{n+1}} \tilde S_{n-j_n} }| \nonumber\\ & \ \le \ & \max_{z} \left ( |1- E_z^Q\cos \left( \frac{\theta \tilde S_{n-j_n}}{\sqrt{n+1}}\right) | + |E_z^Q \sin \left(\frac{\theta \tilde S_{n-j_n}} {\sqrt{n+1}} \right)|\right).\end{eqnarray}
Since $|S_{n-j_n}| =O( \ln n)$, it follows from bounded convergence that $ \sup_{z} E^Q_z |f_n -  f_{j_n} | \to 0$, and so condition 2.  holds.
Finally, we recall from the Central Limit Theorem for additive functionals of finite state Markov chains (e.g. \cite{maxwell_woodroofe},\cite[Theorem 5]{benari_neumann}, that \begin{equation} E_{\mu}^Q (f_n) \to e^{-\frac{ \sigma^2}{2}},\end{equation} where $\sigma^2 = \lim_{n\to\infty} \frac{1}{n+1} E_{\pi^Q}^Q \left ( \tilde S_n ^2  \right)$. The result now follows from Theorem \ref{th:additive_functionals}.    \end{proof}


\subsection{Application to Zeckendorf Decompositions}
\label{sec:application_zeckendorf}

In this section we show how the results obtained in Section \ref{sec:additive_theory} apply to generalized Zeckendorf decompositions. In particular we will show that the generalized Lekkerkerker's theorem (Theorem \ref{th:gen_lek}) and the corresponding Central Limit Theorem are specials cases to Theorem \ref{th:additive_functionals}-1 and Theorem \ref{th:LLN_CLT}-2. We will also carry out explicit computations for the standard Zeckendorf decomposition, where all quantities are easily computable. \\

In order to apply the results in  the context of generalized Zeckendorf decomposition, in Definition \ref{def:additive} we identify  ${\cal L}$, $Z$ and $Q$ in the definition as the same quantities defined in  Section \ref{sec:prob}, and also set $\varphi=\varphi_c$, and $\mu= \tilde\varphi_c$, where $\varphi_c$ and $\tilde \varphi_c$ are as in  Section \ref{sec:prob}. With these choices, the measure $Q_n$ of Definition \ref{def:additive} coincides with $Q_n$ of Section \ref{sec:prob}. \\

Recall $k(N)$, the number of nonzero summands in the generalized Zeckendorf decomposition of $N$, defined in \eqref{eq:kN_def}. Let $g:{\cal L} \to \{0,1\}$ be defined as $g(x,j,j')=1$ if and only if $x>0$. Then if $N \in [G_{n+1},G_{n+2})$, from \eqref{eq:decomp2} we have that  that $k(N) =S_n$, where $S_n$ is the additive functional $S_n =  \sum_{j=0}^n g (Z_j)$.   Observe that $E_{\pi^Q} g = 1- \pi_1(0)$, and so $\tilde g= g - 1 + \pi_1(0)$. Furthermore, since $\pi^Q(z) = \varphi_c (z) \nu_c(z)$, it follows that $E_{\pi^Q} \frac{1}{\varphi_c} = \|\varphi\|_1$. The following therefore follow immediately from Theorem \ref{th:additive_functionals} and Theorem \ref{th:LLN_CLT}.

   \begin{cor} For generalized Zeckendorf decomposition:
  \label{cor:lekkerkerker}
  \begin{enumerate}
  \item Generalized Lekkerkerker's Theorem (Theorem \ref{th:gen_lek}):
      \be E^{Q_n} k(N)\ =\ C_{\rm Lek} (n+1)  + d\ee  where
      \be C_{\rm Lek} \ = \ 1-\pi_1(0),~d \ = \ E_{\tilde \varphi_c} Q^\#(1-\delta)+\frac{ E_{\pi^Q} (1-\delta)  (Q^\# \frac{1}{\varphi_c})}{\|\nu_c\|_1},  \ee
      \item Variance:
      \be E^{Q_n} (k(N) - C_{\rm Lek} (n+1) )^2\ =\ (1+o(1)) (n+1) \sigma^2\ee
      where
   \be \sigma^2\ =\ E_{\pi^Q} \tilde g((2Q^\#- I )  \tilde g).\ee
   \end{enumerate}
      \end{cor}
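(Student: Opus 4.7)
The plan is to recognize Corollary \ref{cor:lekkerkerker} as a direct specialization of Theorem \ref{th:additive_functionals} and Theorem \ref{th:LLN_CLT} to the specific additive functional that counts the nonzero digits of a generalized Zeckendorf decomposition. First, I will line up the abstract setup of Definition \ref{def:additive} with the concrete Zeckendorf setup: take ${\cal L}$, $Z$ and $Q$ exactly as in Section \ref{sec:prob}, set $\varphi = \varphi_c$ and $\mu = \tilde\varphi_c$. With these choices Theorem \ref{th:structure} says that the abstract $Q_n$ of Definition \ref{def:additive} coincides with the Zeckendorf $Q_n$. I then define $g(x,j,j') := \mathbf{1}_{\{x>0\}} = 1-\delta(x,j,j')$, so that under the bijection \eqref{eq:decomp2} between $N \in [G_{n+1},G_{n+2})$ and legal realizations, one has $k(N) = S_n = \sum_{j=0}^n g(Z_j)$.

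Next I will collect the stationary data needed to convert the abstract formulas into the concrete ones. Using $\pi^Q(z) = \varphi_c(z)\nu_c(z)$ from \eqref{eq:piQ}, I get $E_{\pi^Q} g = 1 - \pi_1^Q(0)$, which identifies $C_{\rm Lek}$, and $E_{\pi^Q}(1/\varphi_c) = \sum_z \nu_c(z) = \|\nu_c\|_1$. I will also invoke the general property that $Q^\#$ takes values in the mean-zero subspace, so $E_{\pi^Q}(Q^\# h) = 0$ for every $h$; consequently $E_{\pi^Q}\tilde g \cdot (Q^\# h) = E_{\pi^Q} g \cdot (Q^\# h)$ because $\tilde g$ and $g$ differ by a constant. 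This identity is what lets me replace the $\tilde g$ of Theorem \ref{th:additive_functionals} by $1-\delta$ in the formula for $d$.

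For part (1), I substitute into \eqref{eq:lekkerkerker2}. Writing $S_n = \tilde S_n + (n+1)E_{\pi^Q} g$, taking $E^{Q_n}$ of both sides, and plugging in the evaluated stationary quantities gives
\[
E^{Q_n} k(N) \;=\; (n+1)\,C_{\rm Lek} \;+\; E_{\tilde\varphi_c}\!\bigl(Q^\#(1-\delta)\bigr) \;+\; \frac{E_{\pi^Q}(1-\delta)\bigl(Q^\# \tfrac{1}{\varphi_c}\bigr)}{\|\nu_c\|_1} \;+\; o(1),
\]
which is exactly the claim, with the $o(1)$ absorbed into the $d$-level error. For part (2), I simply note that, by construction, $k(N) - C_{\rm Lek}(n+1) = \tilde S_n$ pointwise, so that $E^{Q_n}(k(N)-C_{\rm Lek}(n+1))^2 = E^{Q_n}\tilde S_n^2$, and the claim is an immediate restatement of \eqref{eq:sec_mom} (the second relation yields the asymptotic equivalence of $E^{Q_n}\tilde S_n^2$ and $E^Q_{\pi^Q}\tilde S_n^2$, and the first evaluates the latter as $(n+1)\sigma^2 + o(1)$ with $\sigma^2 = E_{\pi^Q}\tilde g\,((2Q^\# - I)\tilde g)$).

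The main obstacle is really bookkeeping rather than new mathematics: one must keep careful track of the shift between $S_n$ and $\tilde S_n$, justify that the cross term $E_{\pi^Q}\tilde g\,(Q^\#\tfrac{1}{\varphi_c})$ coincides with $E_{\pi^Q}(1-\delta)\,(Q^\#\tfrac{1}{\varphi_c})$ so that the constant matches the $d$ in the statement, and verify the computations $E_{\pi^Q} g = 1-\pi_1^Q(0)$ and $E_{\pi^Q}(1/\varphi_c) = \|\nu_c\|_1$ from Proposition \ref{pr:quantities} and \eqref{eq:piQ}. Once these identifications are in place, both parts of the corollary follow directly from the already-proved Theorem \ref{th:additive_functionals}.
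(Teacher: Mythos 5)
Your proposal is correct and matches the paper's (quite terse) argument exactly: identify the abstract data of Definition \ref{def:additive} with the Zeckendorf data from Section \ref{sec:prob}, take $g(x,j,j') = \mathbf{1}_{\{x>0\}}$ so that $k(N) = S_n$, compute $E_{\pi^Q} g = 1-\pi^Q_1(0)$ and $E_{\pi^Q}(1/\varphi_c) = \|\nu_c\|_1$ from $\pi^Q = \varphi_c \nu_c$, and then read off both parts from \eqref{eq:lekkerkerker2} and \eqref{eq:sec_mom}. Your observation that $E_{\pi^Q}(Q^\# h) = 0$ lets one freely replace $\tilde g$ by $g = 1-\delta$ in the $d$-formula is exactly the bookkeeping step the paper leaves implicit, and your $\|\nu_c\|_1$ also quietly corrects a typo in the surrounding text (which writes $\|\varphi\|_1$).
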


   \begin{cor} For generalized Zeckendorf decompositions we have
     \begin{enumerate}
   \item    Law of Large Numbers:
      \be  Q_n\left( \left | k(N) - C_{\rm Lek}(n+1) \right | > n \epsilon\right)\ \to\ 0;\ee

      \item Central Limit Theorem:
      \be Q_n\left( \frac{k(N) - C_{\rm Lek}(n+1) }{\sqrt{n+1}} \in \cdot\right)\ \to\ N(0,\sigma^2) \ee
      where $\sigma^2$ is as in Corollary \ref{cor:lekkerkerker}
      \end{enumerate}
     \end{cor}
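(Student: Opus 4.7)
The plan is to deduce both parts directly from the general Theorem \ref{th:LLN_CLT} by identifying $k(N)$ with an additive functional of the Markov chain $Z$, in exactly the same manner used to derive the preceding Corollary \ref{cor:lekkerkerker}. Specifically, with $\mathcal{L}$, $Z$, $Q$, $\varphi_c$ and $\tilde\varphi_c$ as in Section \ref{sec:prob} and the choice $\varphi = \varphi_c$, $\mu = \tilde\varphi_c$ in Definition \ref{def:additive}, the measure $Q_n$ there coincides with the Zeckendorf $Q_n$. Take $g:\mathcal{L} \to \{0,1\}$ with $g(x,j,j') = 1$ iff $x > 0$. By \eqref{eq:decomp2}, $k(N) = S_n := \sum_{j=0}^n g(Z_j)$, and since $E_{\pi^Q} g = 1 - \pi^Q_1(0) = C_{\rm Lek}$, the centered functional satisfies
\begin{equation}
\tilde S_n \ = \ \sum_{j=0}^n \bigl(g(Z_j) - E_{\pi^Q} g\bigr) \ = \ k(N) - C_{\rm Lek}(n+1).
\end{equation}

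For the law of large numbers, Theorem \ref{th:LLN_CLT}(1) applied to this $\tilde S_n$ yields $Q_n(|\tilde S_n|/(n+1) > \epsilon) \to 0$ for every $\epsilon > 0$. Since $\{|k(N) - C_{\rm Lek}(n+1)| > n\epsilon\} \subseteq \{|\tilde S_n|/(n+1) > n\epsilon/(n+1)\} \subseteq \{|\tilde S_n|/(n+1) > \epsilon/2\}$ for all $n \geq 1$, the claim follows. For the central limit theorem, Theorem \ref{th:LLN_CLT}(2) gives $\tilde S_n/\sqrt{n+1} \Rightarrow N(0,\sigma^2)$ under $Q_n$, with the variance $\sigma^2 = E_{\pi^Q} \tilde g((2Q^\# - I)\tilde g)$. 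Dividing the identity $\tilde S_n = k(N) - C_{\rm Lek}(n+1)$ by $\sqrt{n+1}$ yields the statement.

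There is essentially no obstacle beyond the bookkeeping described above. The entire analytical content (the exponential ergodicity estimates, the variance computation, and the characteristic function argument reducing to a classical CLT for additive functionals of a finite-state Markov chain) was already carried out in the proofs of Theorems \ref{th:additive_functionals} and \ref{th:LLN_CLT}; what remains for this corollary is the mechanical identification $k(N) \leftrightarrow S_n$ and recognition of $C_{\rm Lek}$ as the stationary mean of $g$. The only microscopic care needed is the trivial adjustment between the normalizations $n$ (as in the corollary) and $n+1$ (as in the theorem), absorbed at the cost of a harmless factor of two in $\epsilon$.
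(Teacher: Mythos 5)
Your proposal is correct and follows exactly the route the paper itself takes: the paper states that both corollaries ``follow immediately from Theorem \ref{th:additive_functionals} and Theorem \ref{th:LLN_CLT}'' once $k(N)$ is identified with the additive functional $S_n = \sum_{j=0}^n g(Z_j)$ for $g(x,j,j') = \mathbf{1}_{\{x>0\}}$ and $E_{\pi^Q} g = 1 - \pi_1^Q(0) = C_{\rm Lek}$ is noted. Your handling of the $n$ versus $n+1$ normalization in the LLN is the only place where any care is needed, and your factor-of-two argument is valid for all $n \geq 1$.
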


In the remainder of the section we compute all constants above for the standard Zeckendorf decomposition. First we need to compute $Q^\#$.

\begin{example}
 For the standard Zeckendorf decomposition,
\begin{equation} Q^{\#} \ = \  \frac 15 \left ( \begin{array} {ccc} 5-\phi & \phi -4 & -1 \\ -\phi & \phi+1 & -1 \\
1-3\phi & 2\phi -2 & \phi +1 \end{array}\right).\end{equation}
\end{example}

To prove the identity, recall the expressions for $Q$ and $\pi^Q$ computed in Example \ref{ex:zeck_quant}. Let  $A=I-Q$, and let $v_1 = (0,1,-1)^t$, $v_2=(1,0,-\phi)^t$, and $v_3 = (1,1,1)^t$. Then $E_{\pi^Q} v_1 = E_{\pi^Q}  v_2=0$. Since $v_1$ and $v_2$ are linearly independent, it follows that they span the $A$-invariant space  $V =\{v: E_{\pi^Q} v=0\}$. In addition $A v_3=0$. Letting $q= 1- \frac{1}{\lambda_C}  = \frac{1}{\lambda_C^2}$, a straightforward  calculation shows that $A v_1 = q v_2 + (1+q) v_1$, and $A v_2 = v_2$. Thus  $v_1 = q v_2 + (1+q) Q^\# v_1$,  $Q^\# v_2 =v_2$ and $Q^\# v_3 = 0$. These determine $Q^\#$. \\

 Also, from Example \ref{ex:zeck_quant} we have that  $\pi_1(0) = \frac{\phi+1}{\phi+2}$,  $\tilde \varphi_c$ is a point mass, and $\|\nu_c\|_1 = \frac{5\phi+3}{\phi+2}$. In addition,  $\pi^Q = \frac{1}{\phi+2} \left(\phi ,1,1\right)^t$, and $\varphi_c = \frac{1}{2\phi+1} (\phi,\phi,1)^t$. Since  also $g= (0,0,1)^t$, we have   $Q^\# g  =\frac 15  \left (1-3\phi,2\phi-2,\phi+1 \right )^t$, and  $Q^\# \frac{1}{\varphi_c} = \frac{1}{5(\phi-1)}\left(-1, -1, \phi+1\right)^t$. As a result, we have the following.

   \begin{example}
   For the standard Zeckendorf decomposition:
   \begin{equation} C_{\rm Lek} \ = \ \frac{1}{\phi+2} \ = \ \frac{5 -\sqrt{5}}{10},\ \ \ \ ~d\ =\ \frac 35.\end{equation}
   \end{example}

We finally compute $\sigma^2$.   Clearly, $\tilde g=(0,0,1)^t - \frac{1}{2+\phi} (1,1,1)^t=\frac{1}{2+\phi} ( -1,-1,1+\phi)^t$.
It therefore follows that $\tilde g Q^\# \tilde g= \tilde g Q^\# (0,0,1)^t = \frac 15 \left ( \frac{1}{\phi+2} ,\frac{1}{2+\phi} ,(1-\frac{1}{2+\phi}) (\phi+1) \right)^t$, and so   the expectation is equal to
\begin{equation}
\label{eq:first} 2E_{\pi^Q}\tilde gQ^\# \tilde g \ = \ \frac 25  \left ( \frac{1+\phi+(1+\phi)^2}{(\phi+2)^2}\right)\ = \ \frac{2(\phi+2)}{25}.
\end{equation}
 Since $\tilde g^2  = (\frac{1}{(\phi+2)^2}, \frac{1}{(\phi+2)^2}, \frac{(\phi+1)^2}{(\phi+2)^2})^t= \frac {1}{5 (1+\phi)}( 1,1,(1+\phi)^2)^t$,
 it follows that
 \begin{equation}
 \label{eq:second}
 E_{\pi^Q} \tilde g^2 \ = \ \frac{1}{5(1+\phi)}  \frac{( \phi +1)+(\phi+1)^2}{\phi+2} \ = \ \frac 15.
 \end{equation}

We therefore have

\begin{example} For the standard Zeckendorf decomposition:
$\sigma^2 = \frac{2\phi-1}{25}  = \frac{\sqrt{5}}{25}$.
\end{example}


\section{Gaps in Zeckendorf Decomposition}
\label{sec:gaps}
\subsection{Gap Distribution}
\label{sec:gap_dist}
In this section we consider the asymptotic distribution of gaps between non-zero terms in the generalized  Zeckendorf decomposition. This will be an application of our results on additive functionals from the previous section. We will first prove a statement on an ``average" gap distribution, Theorem \ref{th:gap_dist}, and we will later prove convergence of empirical gap measures in probability, Theorem \ref{th:weaklim_prob}. Let us first  define the notion of a gap. We work under the same assumptions and notation as in Section \ref{sec:prob}. Suppose that $N\in \N$ admits a legal decomposition \eqref{eq:decomp2}  with $X_0>0$. Note that $X_j$  counts the repetitions of $G_{n-j+1}$, and if repeating more than $1$ times, we can view this as $X_j-1$ gaps of length zero. If  $X_j>0$, then we have a gap of length $1$  or larger, the length of the gap equal to  $\min\{k \ge 1: X_{j+k} >0\}$. Let $N_n (k)$ denote the number of gaps of length $k$ in the first $n$ digits, and let $N_n= \sum_k N_n(k)$. We define the gap distribution $\mu_n$ as a probability measure on $\Z_+$ given by

\begin{equation}\mu_n (k) \ = \ \frac{ E^{Q_n} N_n (k)}{E^{Q_n} N_n}.\end{equation}
To state the next theorem, let
\be \nu(k) \ =\ \lambda_C ^{-(k-1)} (1-\lambda_C^{-1}) \ee
denote the probability density of a geometric random variable with parameter $\lambda_C^{-1}$. We have

\begin{theorem} Let  $ H_1 = \{(0,b,1) \in{\cal L}\}$ and  $H_2 = \{(0,b+1,b+2)\in {\cal L}: c_b>0,c_{b+1} =0\}$.  For $z=(0,b+1,b+2) \in H_2$ we let
 \begin{align} r(b) &\ = \  \max\{j : c_{b+j} = 0\},~\nonumber\\
  \rho(b)   &\ = \  Q ( (0,b+r(b),b+r(b)+1),(0,b+r(b)+1,1))= \frac{\varphi_C (1)}{\lambda_C \varphi_C(b+r(b)+1)}\mbox{, and }\nonumber\\
 h (b,k) &\ = \  \begin{cases} 0 & k < r (b)+1 \\  1- \rho(b) & k = r(b)+1 \\ \rho (b)  \lambda_C^{-(k-r(b)-2)}(1-\lambda_C^{-1}) & k> r(b)+1.\end{cases}
 \end{align}
Then
\label{th:gap_dist}
\begin{enumerate}
\item $\lim_{n\to\infty} \frac{1}{n} E^{Q_n} N_n =M_{\pi^Q_1}$.
\item
\be \lim_{n\to\infty} \mu_n(k)\ =\ \begin{cases} 1- \frac{1-\pi^Q_1(0)}{M_{\pi^Q_1}}& k= 0 \\
\frac{ 1- \pi^Q_1(0) - \pi^Q(H_1) ( 1- \lambda_C^{-1}) -\sum_{z\in  H_2} \pi^Q (z) (1- \rho(z(2)))}{M_{\pi^Q_1}}& k=1.
\end{cases}\ee
\item For $k\ge 2$,
 \begin{align}
 \nonumber
 \label{eq:zzz}\lim_{n\to\infty} \mu_n (k) &\ = \  \frac{ \pi^Q (H_1) \nu(k-1)}{M_{\pi^Q_1}}.\nonumber\\
 &\ \ \ \ \ \ +\ \frac {\sum_{z \in H_2} \pi^Q ( z ) \left ( h (z(2)-1,k) -\rho(z(2))\nu (k-1)\right)}{M_{\pi^Q_1}}.
 \end{align}
 \end{enumerate}
 \end{theorem}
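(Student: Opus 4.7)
The plan is to recognize $N_n$ and the gap counts $N_n(k)$ as (block) additive functionals of the Markov chain $Z$ under $Q$, and then apply the asymptotic machinery developed in Section \ref{sec:additive} together with the transfer Proposition \ref{lem:asymp} to pass from $E^{Q_n}$ to $E^Q_{\pi^Q}$, reducing every assertion to a stationary Markov-chain computation. Up to boundary terms, $N_n = \sum_{j=0}^{n} X_j - 1$ (one fewer gap than summand) and $N_n(0) = \sum_j (X_j-1)^+$ are both additive functionals, so Theorem \ref{th:additive_functionals} immediately yields $\lim n^{-1} E^{Q_n} N_n = E_{\pi^Q}[X] = M_{\pi^Q_1}$ (part (1)) and $\lim n^{-1} E^{Q_n} N_n(0) = M_{\pi^Q_1} - (1-\pi^Q_1(0))$, which, divided by $M_{\pi^Q_1}$, gives the $k=0$ case of part (2).

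For $k \ge 1$, the count
\[
N_n(k) \;=\; \sum_{j=0}^{n-k} \mathbf{1}\bigl\{X_j>0,\ X_{j+1}=\cdots=X_{j+k-1}=0,\ X_{j+k}>0\bigr\}
\]
is a sum of $(k{+}1)$-block indicators; the centering-and-group-inverse argument in the proof of Theorem \ref{th:additive_functionals} goes through verbatim for the product chain on $\mathcal{L}^{k+1}$, giving $\lim n^{-1} E^{Q_n} N_n(k) = \alpha_k$ where $\alpha_k := P_{\pi^Q}(X_0>0,\ X_1=\cdots=X_{k-1}=0,\ X_k>0)$, hence $\lim \mu_n(k) = \alpha_k/M_{\pi^Q_1}$. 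The computation of $\alpha_k$ for $k \ge 2$ then rests on the regenerative structure of $Z$: from the allowed-transitions inventory in the proof of Proposition \ref{pr:quantities}, the only zero states reachable in one step from a nonzero state lie in $H_1 \cup H_2$. Conditioning on $Z_1$ splits the analysis into two cases. On $\{Z_1 \in H_1\}$, from any $(0,b,1)$ the only zero successor is $(0,1,1)$, entered with probability $\varphi_C(1)/(\lambda_C\varphi_C(1)) = 1/\lambda_C$; the zero-run length is therefore geometric and $P_{Z_1}(X_1=\cdots=X_{k-2}=0,\ X_{k-1}>0) = \nu(k-1)$. On $\{Z_1 = (0,b+1,b+2) \in H_2\}$, the chain is deterministically marched through the length-$r(b)$ run of vanishing coefficients until it reaches $(0,b+r(b),b+r(b)+1)$, at which point Proposition \ref{pr:quantities}(3) shows that a $\rho(b)$-coin decides between entering $H_1$ (and then continuing geometrically) and escaping to a nonzero summand; this yields the piecewise $h(b,k)$.

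The main obstacle is the combinatorial bookkeeping in recombining the two cases. Because $H_2$ states are reached only from nonzero predecessors, the $H_2$ contribution to $\alpha_k$ is cleanly $\sum_{z \in H_2} \pi^Q(z) h(z(2)-1, k)$, whereas $H_1$ states may be reached from zero predecessors too (both from $H_1$ itself via the self-loop $(0,b,1)\to(0,1,1)$, and via the intermediate zero states lying on forced zero-coefficient runs), so the naive term $\pi^Q(H_1)\nu(k-1)$ over-counts. Handling this requires the balance equation $\pi^Q Q = \pi^Q$ together with the explicit product form $\pi^Q(a,b,b') = K_1K_2\nu_C(b)\varphi_C(b')$ from Proposition \ref{pr:quantities} and the telescoping of forced-run transition probabilities $Q((0,b{+}i,b{+}i{+}1),(0,b{+}i{+}1,b{+}i{+}2)) = \varphi_C(b{+}i{+}2)/(\lambda_C\varphi_C(b{+}i{+}1))$, which collapse the stationary contribution of every intermediate zero state in a forced run onto the single $H_2$ state that initiates it and thereby produce the correction term $\sum_{z \in H_2}\pi^Q(z)\rho(z(2))\nu(k-1)$. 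The $k=1$ case of part (2) follows by the same $H_1/H_2$ decomposition applied to $\alpha_1 = P_{\pi^Q}(X_0>0) - P_{\pi^Q}(\text{nonzero }Z_0, \text{zero }Z_1)$, with the $H_2$ half simply contributing $\sum_{z \in H_2}\pi^Q(z)(1-\rho(z(2)))$ from the $k = r(b)+1$ branch of $h(b,\cdot)$.
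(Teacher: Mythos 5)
Your proposal takes essentially the same route as the paper: cast $N_n$ and $N_n(k)$ as (block) additive functionals, evaluate their stationary rates under $Q$, decompose by the first zero state $Z_1 \in H_1$ versus $Z_1 \in H_2$, and transfer from $Q$ to $Q_n$ via the additive-functional machinery of Section~\ref{sec:additive}. The regenerative picture — geometric zero-runs after $H_1$ states, forced runs after $H_2$ states resolved by a $\rho(b)$-coin — is the same as the paper's $B_0,B_1,B_2,B_3$ decomposition (with $H_1 = B_0\cup B_1\cup B_3$ and $H_2=B_2$), and your observation that the stationary mass of forced-run states telescopes onto the initiating $H_2$ state reproduces the paper's identity $\pi^Q(B_3)=\sum_{z\in B_2}\pi^Q(z)\rho(\cdot)$.

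The one place where your bookkeeping is incomplete is the $H_1$ over-counting. You correctly flag two sources — the self-loop $(0,b,1)\to(0,1,1)$ and the forced-run intermediates — but the explicit correction term you write, $\sum_{z\in H_2}\pi^Q(z)\rho(z(2))\nu(k-1)$, accounts only for the second. Carrying out the balance-equation accounting (as the paper does) gives
\begin{equation*}
\sum_{z_0\in A}\pi^Q(z_0)\,Q(z_0,H_1)\;=\;(1-\lambda_C^{-1})\,\pi^Q(H_1)-\pi^Q(B_3),
\end{equation*}
so the $H_1$ contribution is $\bigl((1-\lambda_C^{-1})\pi^Q(H_1)-\pi^Q(B_3)\bigr)\nu(k-1)$, not $\bigl(\pi^Q(H_1)-\pi^Q(B_3)\bigr)\nu(k-1)$. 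You can confirm the extra factor $(1-\lambda_C^{-1})$ is needed by checking the standard Zeckendorf case against Example~\ref{ex:zeck_quant} and the ensuing example: there $H_2=\emptyset$, $\pi^Q(H_1)=\tfrac{\phi+1}{\phi+2}$, $M_{\pi^Q_1}=\tfrac{1}{\phi+2}$, and only $(1-\phi^{-1})^2(\phi+1)\phi^{-(k-2)}=\phi^{-k}$ reproduces the stated limit; without the factor you would get $\phi^{-(k-2)}$. (Be aware that the displayed theorem statement itself omits this factor; the paper's own proof and worked example both carry it, so you should flag and resolve this discrepancy rather than match the printed formula.)
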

 Since $\sum_{k\ge 2}  \nu(k-1)=\sum_{k\ge 2} h(b,k) =1$, it follows that the limit $\lim_{n\to\infty} \mu_n(\cdot )$ is a probability measure, which we denote by $\mu_\infty$. A simple argument shows that a stronger result holds. For $n\in\N$, define the empirical gap distribution $\hat \mu_n$ as a random measure on $\Z_+$,   defined by
\be  \hat \mu_n (A)\ =\ \frac{ \sum_{k \in A} N_n(k) }{\max (N_n,1)}.\ee

We therefore have the following.

\begin{theorem}
\label{th:weaklim_prob}
For any $A\subset \Z_+$ and $\epsilon>0$,
\be \lim_{n\to\infty} Q_n (\left |\hat \mu_n (A) -  \mu_\infty (A)\right|>\epsilon)\ =\ 0.\ee
\end{theorem}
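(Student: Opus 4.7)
The plan is to strengthen Theorem \ref{th:gap_dist} from convergence of averaged expectations to convergence in $Q_n$-probability of the random empirical measure by applying the law of large numbers for additive functionals (Theorem \ref{th:LLN_CLT}) separately to each count $N_n(k)$. Whether a gap of size $k$ begins at position $j$ depends only on the block $(Z_j,\dots,Z_{j+k})$, so
\begin{equation*}
N_n(k) \;=\; \sum_{j=0}^{n-k} g_k(Z_j,\dots,Z_{j+k})
\end{equation*}
for a $\{0,1\}$-valued indicator $g_k$, and likewise $N_n$ is a finite sum of such functionals. Passing to the $(k{+}1)$-block chain $\widetilde Z^{(k)}_j := (Z_j,\dots,Z_{j+k})$ --- an irreducible, aperiodic finite-state Markov chain on the set of legal blocks --- and equipping it with the positive function sending a block to $\varphi_c$ of its last coordinate, the $Q_n$-law of the block process is again an $h$-transform of the form in Definition \ref{def:additive}. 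Theorem \ref{th:LLN_CLT}(1) therefore yields constants $c_k\ge 0$ such that
\begin{equation*}
\frac{N_n(k)}{n+1} \ \longrightarrow\ c_k \quad\text{and}\quad \frac{N_n}{n+1} \ \longrightarrow\ M_{\pi^Q_1}>0
\end{equation*}
in $Q_n$-probability; by comparison with Theorem \ref{th:gap_dist} we have $c_k/M_{\pi^Q_1}=\mu_\infty(\{k\})$.

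For \emph{finite} $A\subset \Z_+$ this is already enough: the numerator $\sum_{k\in A} N_n(k)/(n+1)$ converges in probability to $\sum_{k\in A} c_k$ by finite additivity, the denominator $N_n/(n+1)$ concentrates at the strictly positive constant $M_{\pi^Q_1}$, and a Slutsky-type argument gives $\hat\mu_n(A)\to \mu_\infty(A)$ in $Q_n$-probability.

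The main obstacle, and the only step not immediate from the previous section, is handling infinite $A$, which reduces to uniform control on the random tail $\hat\mu_n((K,\infty))$. By Theorem \ref{th:gap_dist}(3), $\mu_\infty(\{k\})$ decays geometrically with ratio $\lambda_C^{-1}$, so $\mu_\infty(A\cap(K,\infty)) \to 0$ as $K\to\infty$. To bound the stochastic tail, note that
\begin{equation*}
\sum_{k>K} N_n(k) \;\le\; \#\bigl\{\,j \le n-K-1 : X_j>0,\ X_{j+1}=\cdots=X_{j+K}=0\,\bigr\},
\end{equation*}
which is itself an additive functional on the block chain with window $K+2$. Theorem \ref{th:additive_functionals} gives
\begin{equation*}
\frac{1}{n+1}\,E^{Q_n}\!\!\Biggl[\sum_{k>K}N_n(k)\Biggr] \;=\; M_{\pi^Q_1}\,\mu_\infty((K,\infty)) + o(1),
\end{equation*}
so Markov's inequality together with the already-established concentration $N_n \ge \tfrac12 M_{\pi^Q_1}(n+1)$ with $Q_n$-probability tending to $1$ yields
\begin{equation*}
Q_n\bigl(\hat\mu_n(A\cap(K,\infty))>\eta\bigr) \;\le\; \frac{2\mu_\infty((K,\infty))}{\eta} + o(1),
\end{equation*}
uniformly in $A$. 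Decomposing $A = (A\cap[0,K])\cup(A\cap(K,\infty))$, applying the finite-$A$ case to $A\cap[0,K]$, and choosing $K$ first (via geometric decay) and then $n$ large, completes the proof.
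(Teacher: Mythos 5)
Your proof is correct and takes essentially the same route as the paper: pass to the block chain $Z^k_j=(Z_j,\dots,Z_{j+k})$ so that $N_n(k)$ and $N_n$ become additive functionals, deduce concentration of $N_n(k)/n$ and $N_n/n$ under $Q_n$ from the variance bound of Theorem \ref{th:additive_functionals} (your appeal to the LLN of Theorem \ref{th:LLN_CLT}(1) is an equivalent packaging), handle finite $A$ by a Slutsky argument, and then truncate for infinite $A$ using the geometric decay of $\mu_\infty$. One small correction: for the block-chain $h$-transform to restrict to the original $Q_n$ on $\sigma(Z_0,\dots,Z_n)$ as in Definition \ref{def:additive}, the auxiliary function must evaluate $\varphi_c$ at the \emph{first} coordinate of the terminal block (so $\varphi'(Z^k_n)=\varphi_c(Z_n)$), as the paper does; your choice of the last coordinate yields $Q_{n+k}$ rather than $Q_n$, which is harmless after a fixed shift $n\mapsto n-k$ but is worth stating explicitly.
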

We comment that the expression for the  limit in Theorem \ref{th:gap_dist} is much simpler when $c_j>0$ for all $j=1,\dots,L$. In this case $H_2=\emptyset$. For the standard Zeckendorf, we have the following.

\begin{example}
For the standard Zeckendorf decomposition, $M_{\pi^Q_1} = \pi^Q_1 (1) = \frac{1}{\phi+2}$ and $\lambda_C = \phi$. Therefore
\begin{enumerate}
\item $\lim_{n\to\infty} \frac{1}{n} E^{Q_n} N_n =\frac{1}{\phi+2}$.
\item
$\lim_{n\to\infty} \mu_n(k)\ =\ \begin{cases} 0 & k= 0,1 \\
\phi^{-k} & k\ge 2.
\end{cases}$
\end{enumerate}
\end{example}

When some of the coefficients are zero, then some gaps of length $\ge 2$ are forced by the recurrence relation,  and taking this into account is the source of the lengthy expression in the theorem.

\begin{example} Consider the recurrence relation with $L=4$,  $c_1=1,c_2=c_3=0,c_4=2$. Then $\lambda_C $ is the largest  (real) root of  $\lambda^3 (\lambda-1) = 2$, $\lambda_C \thickapprox 1.5437$. We have \be  h(k)\ =\ \begin{cases} 0 & k <3 \\  \frac 12 & k=3 \\  \frac 12  \lambda_C^{-(k-4)} (1-\lambda_C^{-1}) & k \ge  4\end{cases}\ee
and
\be  \lim_{n\to\infty} \mu_n (k)\ =\ \begin{cases} 0 & k= 0 \\
  2-\frac{ \lambda_C^2+1}{3\lambda_C} & k=1
  \\  \frac{(\lambda_C-1)^2 - \lambda_C}{3\lambda_C}\nu (k-1) + \frac{ 2\lambda_C-1}{3\lambda_C} h(k) & k\ge 2.  \end{cases}
  \ee
  \end{example}

In this example,
\be {\cal L}\ =\ \left\{z^1 = (0,1,1),z^2=(1,1,2),z^3=(0,2,3),z^4=(0,3,4),z^5=(0,4,1),z^6=(1,4,1)\right\}.\ee There are no gaps of length $0$ as the coefficients immediately show. Gaps of length $1$ only appear in the form $(1,4,1)$ followed by $(1,1,2)$.  Larger gaps can be formed as follows.
\begin{itemize}
 \item Gaps of length $k\ge 2$  through a  sequence of the form $(1,4,1), (0,1,1),\dots,(1,1,2)$, with $(0,1,1)$ repeated $k-1$ times.
 \item Gaps of length $k\ge 3$ through a sequence beginning with  $(1,1,2),(0,2,3),(0,3,4)$, followed by $(1,4,1)$ if length is $3$, or  by $k-3$ repetitions of  $(0,1,1)$ followed by $(1,1,2)$ otherwise.
 \end{itemize}

The larger gaps of the second type are forced by the recurrence, in the sense that the condition $c_2=c_3=0$ implies  $Q( (1,1,2),(0,2,3)) = Q((0,2,3),(0,3,4))=1$, and so every time the sequence hits the state $ (1,1,2)$, a gap of minimal length $3$ occurs. Let us see how this is reflected in the formula. $H_1 = \{(0,1,1),(0,4,1) \}$ and  $H_2=\{(0,2,3)\}$. There's only one element in $H^2$ and therefore we omit the reference to $b$ in the functions $r,\rho,h$. So $r=2$, $\rho = Q( (0,3,4),(0,4,1)) =\frac 12$, and the expression for $h$ follows. \\

We now compute $\pi^Q$.  Let $p=\pi^Q(z^2)$. Since $Q(z^2,z^3)=Q(z^3,z^4)=1$, we have that $p=\pi^Q(z^3)=\pi^Q(z^4)$. Next, $Q(z^4,z^5)=Q(z^4,z^6)=\frac 12$, and so $\pi^Q (z^5)=\pi^Q (z^6) = p/2$. We also observe that \be \pi^Q (z^1)\ =\ \pi^Q (z^1) \lambda_C^{-1}+ \pi^Q (z^5) Q (z^5,z^1) + \pi^Q(z^6) Q (z^6,z^1)\ee  Therefore, $\pi^Q(z^1)= \frac{p}{\lambda_C-1}$.  Now we have $1 = \frac{p}{\lambda_C-1} + 4p $, so altogether, $p=\frac{\lambda_C-1}{4\lambda_C-3}$, and the expression for the limit of $\mu_n$ follow after some algebra.

\begin{proof}[Proof of Theorem \ref{th:gap_dist}]  For a real number $x$, let $x_+= \max (x,0)$. We begin with gaps of length $0$:
  \begin{equation}E^Q_z  N_n(0) \ = \ \sum_{j=0}^{n-1} (Z_j(1)-1)_+.\end{equation}
   The ergodicity of $Z$ under $Q$ implies that
  \begin{equation}
  \label{eq:N0} \lim_{n\to\infty} \frac{E^Q_z N_n(0)}{n} \ = \ \sum_{z=(x,j,j')} \pi^Q (z)(x-1)_+=M_{\pi^Q_1}-1+\pi^Q_1(0).
  \end{equation}
  Before moving to gaps of larger length, we consider the total number of jumps. We have
\begin{align}
\nonumber
 \frac 1n E_z^Q \sum_{k \ge 1} N_n (k) &\ = \  \nonumber
   \frac 1n E^Q_z \sum_{j=0}^{n-1} \ch_{\{Z_j(0)>0\}}\\
    \label{eq:N12}
   & \underset{n\to\infty} { \to} 1-\pi^Q_1(0),
 \end{align}
  and so from \eqref{eq:N0}, \eqref{eq:N12}
    \begin{equation}  \label{eq:N}
    \lim_{n\to\infty}  \frac 1n E^Q_z  N_n  \ = \ M_{\pi^Q_1}.
    \end{equation}
We move to calculation of gaps of length $\ge 2$. We will treat gaps of length $1$ last.   Let $k\ge 2$. Then
  \begin{align} \frac 1n E^Q_z N_n(k) &\ = \    \frac 1n E^Q_z \sum_{j=0}^{n-k}\ch_{\{Z_{j}(1)> 0\}}\left(\prod_{\ell=1}^{k-1} \ch_{\{Z_{j+\ell}(1)=0\}}\right) \ch_{\{Z_{j+k}(1) > 0\}}.
\end{align}
Let $B=\{(0,b,b')\in {\cal L}\}$. It therefore follows from the Markov property and ergodicity that
\be \lim_{n\to\infty} \frac1n E^Q_z N_n(k)\ =\ \sum_{z^0 \in A} \pi^Q (z^0)  f_B(z^0)\ee
where for $D\subset {\cal L}$ we have
\be f_D(z^0)\ =\  \left(  \sum_{z^1\in D, \dots,z^{k-1}\in B }  \prod_{\ell=1}^{k-1} Q(z^{\ell-1},z^\ell) \right) Q(z^{k-1},A). \ee

Letting
\begin{align}
B_0 &\ = \  \{(0,1,1)\}\nonumber\\
B_1 &\ = \ \{(0,b+1,1)\in {\cal L}: b\ge 1,~c_b >0\},\nonumber\\
B_2 &\ = \  \{ (0,b+1,b+2)\in {\cal L}: b\ge1,~c_b >0, c_{b+1} =0\}, \mbox{ and}\nonumber\\
B_3 &\ = \  \{(0,b+1,1)\in {\cal L}: b \ge 1,~c_b=0,c_{b+1} >0\},
\end{align} we can write \be \sum_{z^0 \in A} \pi^Q (z^0)  f_B(z^0)\ =\ \sum_{m=0}^3  \sum_{z^0 \in A} \pi^Q (z^0) f_{B_m} (z^0).\ee Note that $\cup_{m=0}^3 B_m = \{(0,b,b') \in {\cal L}: c_{b-1}  \ne 0 \mbox{ or } c_{b'} \ne 0\}$, and so this union does not necessarily contain all elements $(0,b,b') \in {\cal L}$. However, it does contain all such elements which are accessible from $A$ in one step (and more, whenever $B_3$ is not empty). \\

We now simplify the expression, beginning with the sum over $B_1$. It is important to observe that $B_1$ is the subset of states in $B$ accessible in one step  only  from $A$,  In addition, if $z^1 \in B_1$, then it immediately follows that $z^2=\dots = z^{k-1} = (0,1,1)$, and that allowed transitions to $(0,1,1)$ always have probability $\lambda_C^{-1}$.   As a result, we have that
\be f_{B_1} (z^0)\ =\ Q(z^0, z^1) \lambda_C^{-(k-2)} (1-\lambda_C^{-1}),\ee
and thus  \be \sum_{z^0 \in A} \pi^Q (z^0) f_{B_1}(z^0)\ =\ \pi^Q (B_1) \nu (k-1) .\ee

Next we consider the sum over $B_0$, namely $z^1 = (0,1,1)$. Clearly:
\begin{align}
\sum_{z^0 \in A} \pi^Q (z^0) f_{(0,1,1)} (z^0) &\ = \  \sum_{z^0\in {\cal L}} \pi^Q (z^0) f_{(0,1,1)} (z^0) - \sum_{z^0 \in B }\pi^Q (z^0) f_{(0,1,1)} (z^0).
\end{align}
Since  $(0,1,1)$ is accessible in one step either from $A$ or from states in $z \in B_0 \cup B_1\cup B_3$ and for all such $z$, $Q(z,(0,1,1)) = \lambda_C^{-1}$, it  follows that
 \be \sum_{z^0 \in A} \pi^Q (z^0) f_{(0,1,1)} (z^0)\ = \ \left ( \pi^Q ( (0,1,1) ) -\pi^Q(B_0 \cup B_1 \cup B_3 )\lambda_C^{-1} \right)  \nu (k-1).\ee
 Hence,
 \be   \sum_{z^0 \in A} \pi^Q (z^0) f_{B_0 \cup B_1} (z^0) \ =\ \pi^Q (B_0 \cup  B_1\cup B_3 ) (1-\lambda_C^{-1}) \nu (k-1) - \pi^Q (B_3) \nu (k-1).\ee


We now consider  $z^1 \in B_2$. Suppose then that  $z^0 \in A$ and $z^1 \in B_2$ and $Q(z^0,z^1)>0$. Since $z^1 = (0,b+1,b+2)$, it follows that $z^0=(c_b,b,b+1)$ and  $c_b >0$. Now if $c_{b+2} =0$, then the only allowed transition from $z^1$ is to $z^2 = (0,b+2,b+3)$. Let  $r=r(b)$ and $\rho =\rho(b)$ as defined in the statement of the theorem. Then  $z^j=(0,b+j,b+j+1)$ for all $j=1,\dots r$, and we conclude that $Q(z^j,z^{j+1}) =1$ for $j=0,\dots, r$. We continue according the the following two cases. \\
~\\
\noindent {\bf 1. $r>  k-1$.} In this case $Q^k (z^0,A)=0$.\\
\noindent{\bf 2. $r \le k-1$.} Then either
\begin{itemize}
\item $r=k-1$, in which case $Q^k(z^0,A) = Q( (0,b+r,b+r+1),A)=1-\rho$;   or
\item $1< r\le k-2$, in which case $z^{r+1} = (0,b+r+1,1)$ and $z^{r+l}= (0,1,1)$ for all $2\le l \le k-1-r$. In particular, since $Q ( (0,1,1) , A) = Q ( (0,b+r+1,1),A)= 1-\lambda_C^{-1}$, we have that
\be Q^k(z^0,A)\ =\ Q ( (0,b+r,b+r+1), (0,b+r+1,1))\nu (k-r-1).\ee
\end{itemize}

The only allowed transitions from  $(0,b+r,b+r+1)$ to $(x,b+r+1,1)$ are to $x=0,\dots,c_{b+r+1}-1$, all with equal transition probability.  Since there are exactly $c_{b+r+1}-\delta_L(b+r+1)$ possible values for $x$, exactly one of which is with $x=0$, letting $\rho (b) = \frac{1}{c_b - \delta_L (b)}$, we have
 \be Q^k (z^0,A)\ =\ \begin{cases} 0 & k < r (z)+1 \\  1- \rho (b+r+1) & k = r(z)+1 \\ \rho (b+r+1) \nu (k-r-1) & k> r(z)+1.\end{cases} 
  \ee

Summarizing the two cases, we conclude that
  \be \sum_{z^0 \in A} \pi^Q (z^0) f_{B_2} (z^0)\ =\ \sum_{z^1 \in B_2} h (z^1(2)-1,k).\ee

Next, when $z^0\in A$,  and $z^1\in B_3$, then $Q(z^0,z^1)=0$. Thus,  we have proved
\be \sum_{m=0}^3 \pi^Q (z^0) f_{B_m} (z^0)\ = \ \left( (1-\lambda_C^{-1})  \pi^Q (H_1) - \pi^Q (B_3) \right)  \nu (k-1)  + \sum_{z^0=(0,b+1,b+2) \in B_2} \pi^Q(z^0) h(b-1,k).\ee
Let $z' \in B_3$. Then there exists a unique $z^1=(0,b+1,b+2)  \in B_2$ such that  $z^1 = (0,b,b+1),~z^2 = (0,b+2,b+3),\dots, z^{r(b)} =(0,b+r(b),b+r(b)+1)$ and $z^{r(b)+1}=z'$.  Since $Q(z^k,z^{k+1})=1$ for $k=1,\dots,r(b)-1$, it easily follows that $\pi^Q (z') = \pi^Q (z^r) \rho (b)=\pi^Q (z^{r-1}) \rho (b)=\dots = \pi^Q (z^1) \rho(b)$. This shows that $\pi^Q(B_3) = \sum_{\{z^1 =(0,b+1,b+2) \in B_2\} }\pi^Q (z^1) \rho(b)$. Plugging this into the formula above, and noting that $H_1$ in the theorem is $B_0 \cup B_1\cup B_3$ and $H_2$ in the theorem is $B_2$, we obtain
\begin{eqnarray}\label{eq:place_holder}
\lim_{n\to\infty}   \frac 1n E^Q_z N_n(k)&\ =\ & \sum_{m=0}^2 \pi^Q (z^0) f_{B_m} (z^0)\nonumber\\
 &\ = \  & (1-\lambda_C^{-1})  \pi^Q (H_1)\nu(k-1) \nonumber\\ & & \ \ \   +\ \sum_{z^0=(0,b+1,b+2) \in H_2} \pi^Q ( z^0) \left ( h(b-1,k) -\rho(b) \nu(k-1))\right).
 \end{eqnarray}

We  turn to gaps of length $1$:
 \begin{align} \frac 1n E^Q_z N_n(1) &\ = \    \frac 1n E^Q_z \sum_{j=0}^{n-1} \ch_{\{Z_j(1) >0\}}\ch_{\{Z_{j+1} (1) >0\}}\nonumber\\
  &\ = \ \frac 1n \sum_{j=0}^{n-1} E_z^Q\ch_{\{Z_j(1) > 0\}}E_{Z_j}^Q \ch_{\{Z_1(1)> 0\}},
\end{align}
where in the second line we applied the Markov property. Let
\be A \ = \ \{(x,b,b')\in {\cal L}: x> 0\}.\ee
Ergodicity of $Z$ under $Q$ then gives
\begin{align}
\label{eq:nice1}
  \lim_{n\to\infty} \frac 1n E^Q_z N_n(1)\ =\ \sum_{z\in A } \pi^Q(z) Q(z,A) = \pi^Q (A) - \sum_{z \in A^c} \pi^Q (z) Q (z,A).
\end{align}

Given $z=(0,b,b') \in A^c$, exactly one of the following holds.
  \begin{itemize}
  \item $c_b=0$, $b'=b+1$, $c_{b+1}=0$, and then $Q(z,A)=0$.
  \item  $c_b=0$, $b'=b+1$, $c_{b+1}>0$. From the argument in the paragraph above \eqref{eq:place_holder}, and since $Q(z,A) = 1-Q(z,A^c)$ we obtain that
  \bea \sum_{\{z=(0,b,b+1)\in {\cal L}: c_b =0,c_{b+1} =1\}} \pi^Q (z) Q(z,A) &\  =\ & \pi^Q(B_2) - \pi^Q (B_3) \nonumber\\ &=& \sum_{z^0=(0,b+1,b+2) \in H_2} \pi^Q (z^0) (1- \rho(b)).\ \ \ \eea
  \item $c_b >0$ and then $b'=1$, equivalently, $z \in H_1$, in which case $Q(z,A) = 1-Q(z,(0,1,1))=1-\lambda_C^{-1}$.
  \end{itemize}
  Summarizing,
  \begin{align}
  \label{eq:nice2}
    \lim_{n\to\infty} \frac 1n E^Q_z N_n(1) = 1-\pi^Q_1(0) - (1-\lambda_C^{-1})\pi^Q_1(H_1)-\left ( \sum_{z^0=(0,b+1,b+2) \in H_2} \pi^Q (z^0) (1- \rho(b))\right).
  \end{align}

To finish the proof, we need to show that the results continue to hold when considering the measure $Q^n$ instead of $Q$. However, by the Markov property, the expectation under $Q^n$ of $N_n$, and $N_n(k)$ are equal to the expectations of corresponding additive functionals. Therefore  it follows from Theorem \ref{th:additive_functionals} that the expectations of $N_n(k)$ and $N_n$ under $Q_n$ are asymptotical equivalent to their expectations with respect to $Q_{\varphi_c}$. The theorem now follows.\end{proof}

\begin{proof}[Proof of Theorem \ref{th:weaklim_prob}]
We have
 \bea \{ |\hat \mu_n (A) - \mu_\infty (A) |>\epsilon \} & \ \subset \ & \cup_{k\in A} \{ \left | \hat \mu_n (k) - \mu_\infty (k) \right| > \epsilon\}\nonumber\\ &\ = \  & \cup_{k\in A} \{ \left | N_n (k) -\mu_\infty (k) N_n  \right|> \epsilon  N_n \}\cup \{N_n =0\}.\eea
 Since $Q_n (N_n=0) =Q_n (Z_0>0,Z_1=\dots =Z_n=0)\to 0$, we can ignore the event $\{N_n=0\}$. Now for every fixed $k\in A$, we have
\be \{ \left | N_n (k) -\mu_\infty (k) N_n  \right|> \epsilon  N_n \}\subset \{ |N_n(k) - \mu_\infty (k) E_{\pi^Q}^Q  N_n | > \epsilon/2 \} \cup \{ | N_n - E_{\pi^Q}^Q N_n| > \epsilon/2\}.\ee
Next observe that both $N_n$ and $N_n(k)$ are additive functionals for the process $Z^k = (Z^k_n:k\in\Z_+)$, where $Z^k_n =(Z_n,Z_{n+1},\dots,Z_{n+k})$, and so we can consider $N_n(k)$ and $N_n$ as additive functionals of $Z^k$.  Letting $ \varphi' _c(z^0,z^1,\dots,z^k)= \varphi_c (z^0)$, and $\tilde \varphi_c'(z^0,\dots,z^k)$, the distribution of $Z_0,\dots,Z_k$ under $Q_{\tilde\varphi_c}$, then if as in Definition \ref{def:additive} we define
\be Q'_{n,k} (A)\ =\ \frac{E_{\tilde \varphi_c'}^Q \left (\frac{\ch_{A}}{\varphi_c'(Z_n^k)}\right)}{E^Q_{\tilde \varphi_c'}\left ( \frac{1}{\varphi_c'(Z_n^k)}\right)},\ee
it follows that the restriction of $Q'_{n,k}$ to events generated by $Z_0,\dots,Z_n$ coincides with $Q_n$.  In particular,  the distribution of the additive functionals $N_n$ and $N_n(k)$ for $Z^k$ under $Q_{n,k'}$ coincides with their distribution under $Q_n$. From the variance estimate \eqref{eq:sec_mom} in Theorem  \ref{th:additive_functionals} applied to these additive functionals under $Q_{n,k}'$, we conclude that
 \be Q_n ( \{  |N_n(k) - \mu_\infty (k) E_{\pi^Q}^Q  N_n | > \epsilon/2 \}  ) =O(n^{-1})\mbox{ and }Q_n (\{ | N_n - E_{\pi^Q}^Q N_n| > \epsilon/2\})\ =\ O(n^{-1}).\ee
 Therefore if $A$ is finite, we obtain that
 \be \label{eq:weak_conv} \lim_{n\to\infty} Q_n (  |\hat \mu_n (A) - \mu_\infty (A) |>\epsilon ) \ =\ 0.\ee
 Now if $A$ is infinite,  letting $A_M = A \cap \{0,\dots,M\}$, we observe that
\begin{align}
\nonumber
 | \hat \mu_n (A) - \mu_\infty (A) |&\ = \  |\hat \mu_n (A_M) - \mu_\infty (A_M) | + \hat \mu_n (\{M+1,\dots\}) + \mu_\infty (\{M+1,\dots \})\nonumber\\
 &\ \le\  |\hat \mu_n (A_M) - \mu_\infty (A_M) | + \hat\mu (\{M+1,\dots\})+ \mu_\infty (\{M+1,\dots\}).
 \end{align}
Fix $\epsilon$, and let $M$ be such that $\mu_{\infty}(\{M+1,\dots\})< \epsilon$. Thus for $n$ large enough,
\be \{ | \hat \mu_n (A) - \mu_\infty (A) |>5\epsilon \}\subset \{ |\hat \mu_n (A_M) - \mu_\infty (A_M) |  > 2\epsilon\}\cup \{ \hat \mu_n (\{M+1,\dots\}) >2\epsilon\}.\ee
The measure of the first event on the right-hand side tends to $0$ as $n\to\infty$ by \eqref{eq:weak_conv}. As for the second event, it is equal to the event $\{\hat \mu_n (\{0,\dots,M\}) <1-2\epsilon\}$. However, since, again by \eqref{eq:weak_conv}  $\mu ( |\hat \mu_n (\{0,\dots,M\})- \mu_\infty (\{M+1,\dots\})|> \epsilon/2)$  tends to $0$, it follows that $Q_n ( \{ \hat \mu_n (\{0,\dots,M\}> 1-3\epsilon /2)$ tends to $1$. But this event is $\{ \hat \mu_n (\{M+1,\dots,\})<3 \epsilon /2\}$, and so $Q_n ( \hat \mu_n (\{M+1,\dots,\}) > 2\epsilon) $ tends to $0$ as well. The result now follows.\end{proof}

\subsection{Maximal Gap}

Next we consider the maximal gap $M_n$, defined as \begin{equation} M_n \ = \ \sup\{k\in \Z_+: N_n(k) >0\}.\end{equation}

Although we can prove the results at the same level of generality as in the previous section, we prefer to keep the expressions cleaner and simpler, and will assume throughout this section that $c_1,\dots,c_L >0$. \\

Our analysis is based on a renewal structure we now describe. We refer to the gaps of length $k\ge 2$  as ``long gaps", and denote the lengths of the long gaps, indexed by order of appearance, by $(R_j:j\in\N)$. Observe that any long gap is  followed by a possibly empty sequence of: gaps of  zero  length (summand repeated more than once, see first paragraph of Section \ref{sec:gap_dist}) and gaps of length $1$,  independent of $k$. This is then  followed again by an independent long gap. The number of the small gaps is bounded above by $(L-1)+\sum_i (c_i-1)=(\sum_i c_i)-1$, as  the first summand bounds the number of length  $1$,  and the second summand bounds the number of gaps of length  zero.   
Let $T_m$ denote the first time exactly $m$ long gaps are completed,   $ m(n) = \sup\{m : T_m \le n\}$. Observe that a long gap is completed whenever the digit zero is followed by a nonzero digit. Therefore
\begin{equation}m(n) \ = \ \sum_{j=0}^{n-1} \ch_{0}(Z_j(1))\ch_{\{Z_{j+1}(1)> 0\}}.\end{equation}
From the Markov property,
\begin{equation} E^Q m(n) \ = \ \sum_{j=0}^{n-1} E^Q \left ( \ch_{0} (Z_j(1))Q_{Z_j}(Z_1(1)> 0)\right),\end{equation}
Letting $A= \{z= (x,b,b') \in{\cal L}: x>0\}$, and repeating a similar computation as in the proof of the case $k=1$ in Theorem \ref{th:gap_dist}, it follows that
\begin{align}
 \lim_{n\to\infty} \frac{1}{n} E^Q m(n) \ &\ = \  \ \sum_{z\in A^c} \pi^Q(z)Q(z,A)\ = \pi^Q(A^c) - \sum_{z \in A} \pi^Q (z) Q(z,A) \nonumber\\
  &\ = \  (1-\pi^Q(0))- (1- \pi^Q(0))+(1-\lambda_C^{-1}) \pi^Q(0),\end{align}
where the last equality follows from \eqref{eq:nice1} and \eqref{eq:nice2}. Also, by the renewal theorem \cite[Theorem 2.4.6]{durrett}
 \begin{equation}
 \label{eq:renewal}
 \lim_{n\to\infty} \frac{m(n)}{n}  \ = \ \alpha,~Q\mbox{-a.s.},
 \end{equation}
 where $\alpha = 1 / E_{\rho}^Q T_1$ and $\rho$ is the uniform distribution on $c_1$ elements: $(x,1,1),~1<x<c_1$ and $(c_1,1,2)$.
The limit above also holds in $L^1(Q)$, as $m(n)\le n$. Consequently
\begin{equation}\alpha \ = \ \pi^Q_1(0)\left(1-\frac{1}{\lambda_C}\right).\end{equation}

To state our result we need to introduce some additional assumption. We say that a sequence $(n_k:k\in\N)$ of natural numbers tending to $\infty$ satisfies the {\it spacing condition} with respect to $\alpha$ and $q$ if
\begin{equation}\label{eq:spacingcondition}\liminf_{k\to\infty} \inf_{z\in \Z_+} \left| \frac{ \ln (n_k \alpha)}{\ln \frac1q}  -z\right|\ >\ 0.\end{equation}

Roughly speaking, this means that $n_k\alpha$ is eventually uniformly far from integer powers of $1/q$ in some  normalized sense.

\begin{theorem}
\label{th:maxgap}
Assume $c_1c_2\cdots c_L>0$. Then for every $k\in \Z$,
 \begin{equation} \lim_{n\to\infty} Q_n\left(M_n \ \le \ \left\lfloor \frac{\ln n \pi_1(0)(1- \frac{1}{\lambda_C})}{\ln \lambda_C}\right\rfloor +k\right) \ = \ e^{-\lambda_C^{-(k-2)}},\end{equation}
 when the limit is taken along any sequence satisfying the spacing condition \eqref{eq:spacingcondition} with respect to $\alpha = \pi_1(0)(1- \frac 1 {\lambda_C})$ and $q=\frac1{\lambda_C}$.
\end{theorem}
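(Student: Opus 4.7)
The plan is to exploit the regenerative structure of the gap sequence under $Q$ and reduce the problem to a classical extreme-value statement for maxima of i.i.d. geometric random variables, then transfer the conclusion back to $Q_n$ via Proposition \ref{lem:asymp}.

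First I would formalize the renewal structure. The times $T_1 < T_2 < \cdots$ at which successive long gaps terminate are stopping times for $Z$, and by the strong Markov property together with the positivity assumption $c_1\cdots c_L>0$ (which forces $H_2 = \emptyset$), the long-gap lengths $(R_j:j\in \N)$ form an i.i.d. sequence under $Q$ (up to the transient initial segment before $T_1$). From Theorem \ref{th:gap_dist} restricted to $k\ge 2$, and since $H_2=\emptyset$, the gap distribution of a long gap is $P(R_j=k)=(1-\lambda_C^{-1})\lambda_C^{-(k-2)}$ for $k\ge 2$, so $P(R_j>t)=\lambda_C^{-(t-1)}$ for $t\ge 1$. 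The renewal limit \eqref{eq:renewal} gives $m(n)/n \to \alpha = \pi^Q_1(0)(1-\lambda_C^{-1})$ almost surely.

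Next I would carry out the extreme-value asymptotics. For the maximum of $m$ i.i.d. copies,
\begin{equation}
 Q\bigl(\max_{1\le j\le m} R_j \le t\bigr) \ =\ \bigl(1-\lambda_C^{-(t-1)}\bigr)^m.
\end{equation}
Setting $m = m(n) \sim \alpha n$ and $t_n = \lfloor \ln(n\alpha)/\ln \lambda_C\rfloor + k$, the product $m\, \lambda_C^{-(t_n-1)}$ is $\lambda_C^{-(k-2)}$ times a factor whose logarithm is the fractional part of $\ln(n\alpha)/\ln\lambda_C$. The spacing condition \eqref{eq:spacingcondition} is precisely what prevents this fractional part from accumulating at the boundary, so that $m\,\lambda_C^{-(t_n-1)} \to \lambda_C^{-(k-2)}$ along the subsequence, giving $\exp(-\lambda_C^{-(k-2)})$. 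To pass from $\max_{j\le m(n)} R_j$ to $M_n$ I would use a sandwich: $M_n$ exceeds $\max_{j\le m(n)} R_j$ by at most the length of a possibly-incomplete gap straddling time $n$, and the event that this boundary gap exceeds $\tfrac12 t_n$ has probability $O(n^{-1/2})$ by the geometric tail, so the discrepancy is negligible. Concentration of $m(n)$ around $\alpha n$ (via the $L^1$ renewal theorem) absorbs the fluctuation of $m(n)$ into the allowed $O(1)$ slack in $k$.

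Finally, to move from $Q$ to $Q_n$, I would apply Proposition \ref{lem:asymp} to $f_n = \ch_{\{M_n\le t_n\}}$, taking $j_n = n-\lfloor \ln^2 n\rfloor$. The point is that $\{M_n\le t_n\}$ differs from its $\sigma(Z_0,\dots,Z_{j_n})$-measurable analogue only on the event that a gap of length exceeding $t_n \sim \log_{\lambda_C}(n\alpha)$ is either straddling time $j_n$ or contained entirely in $[j_n,n]$; both have $Q_{\tilde\varphi_c}$-probability $O((\ln^2 n)/n)=o(1)$ by the geometric tail and a union bound, verifying condition (2) of the proposition. Since the $Q_{\tilde\varphi_c}$-probability already converges, the proposition gives the same limit under $Q_n$.

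The main obstacle I anticipate is the careful handling of the boundary effects at time $n$ and the requirement that one transfer a \emph{probability-type} functional (not a mean of an additive functional) under $Q_n$, which is the role played by the spacing condition: without \eqref{eq:spacingcondition} the sequence $(1-\lambda_C^{-(t_n-1)})^{m(n)}$ does not converge even under $Q$, because $t_n$ is integer-valued while the natural centering $\log_{\lambda_C}(n\alpha)$ is not, and one must argue along subsequences whose fractional parts stay away from the discontinuities. Establishing the $O(n^{-c})$ tail bounds needed to ignore the boundary gaps, uniformly in the initial state, is what makes this rigorous rather than merely formal.
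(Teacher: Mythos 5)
Your proposal follows essentially the same route as the paper: express $M_{T_m}$ as a maximum of i.i.d.\ ${\rm Geom}^-$ variables via the regenerative structure, use the renewal theorem to concentrate $m(n)$ near $\alpha n$, use the spacing condition to handle the floor discretization, and transfer from $Q$ to $Q_n$ via exponential ergodicity. The one packaging difference is that you invoke Proposition~\ref{lem:asymp} with $f_n=\ch_{\{M_n\le t_n\}}$ and $j_n=n-\lfloor\ln^2 n\rfloor$, whereas the paper re-derives the ergodicity estimate directly with $b_n=\lfloor\ln\ln n\rfloor$ and treats the upper and lower bounds separately (exploiting the monotonicity $M_{n-b_n}\le M_n$ together with positivity of $1/\varphi_c$); your symmetric-difference argument is a slightly cleaner encapsulation of the same idea, provided you note that the spacing condition forces $t_{j_n}=t_n$ for large $n$ so that condition~(2) of the proposition reduces to the union-bound tail estimate $Q(M_{j_n}\le t_n<M_n)=O((\ln^2 n)/n)$ you describe.
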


\begin{example}
 For the standard Zeckendorf decomposition, $\lambda_C = \phi$ and $\pi_1(0)= \frac{\phi+1}{\phi+2}$. This gives
   \begin{equation}\lim_{n\to\infty} Q_n\left(M_n \le \left\lfloor \frac{\ln n- \ln (\phi+2) }{\ln \phi }\right\rfloor +k\right) \ = \ e^{-\phi^{-(k-2)}}.\end{equation}
\end{example}

\begin{proof}[Proof of Theorem \ref{th:maxgap}]
To prove the theorem, we need to recall some facts on the maximum of negative geometric random variables. Let $\G$ be a negative geometric random variable with parameter $p\in (0,1)$. That is, for $k \in \Z_+$, $P( \G \ge k) = q^k$ where $q=1-p$.
Let $G$ be negative geometric with parameter $p$. That is, $\G$ takes values in $\Z_+$, and $P(\G\ge k) = q^k$, where $q=1-p$. We denote this distribution by ${\rm Geom}^-(p)$. Let $(\G_k:k\in\N)$ be IID ${\rm Geom}^-(p)$-distributed random variables, and let $M^\G_m = \max_{k\le m} \G_k$. Then $ P( M^\G_m \le j) = (1-q^j)^m$. For each $m\in \N$, let  $\delta_m$ be chosen so that $\frac{\ln m \delta_m}{\ln 1/q} = \left\lfloor \frac{\ln m}{\ln 1/q}\right\rfloor$. Observe then that $\delta_m \in (q,1]$. From this we obtain that for any $k\in\Z$,
 \begin{equation}
 \label{eq:max_geom}
 P\left(M_m^\G \le \left\lfloor \frac{\ln m}{\ln 1/q}\right\rfloor + k\right) \ = \ \left(1- \frac{q^k}{m\delta_m}\right)^m\underset{m\to\infty}{\to} e^{-q^k}.
 \end{equation}

We return to the proof. Fix some sequence satisfying the spacing condition. Abusing notation, we will refer to a generic element in the sequence as $n$. Observe that if we choose $\G_j=R_j-2$, then $(\G_j:j\in \N)$ is an IID sequence of ${\rm Geom}^-(p)$ random variables with $p=1-\lambda_C^{-1}$. In particular, for every $m$
\begin{equation} M_{T_m} \ = \ M^\G_m +2.\end{equation}

Clearly $T_{m(n)} \le n$, but also by  the law of large numbers and \eqref{eq:renewal}
  \begin{equation} \frac{ T_{m(n)}}{ n }\ = \ \frac{  T_{m(n)}}{m(n)} \times  \frac{ m(n) }{n} \underset{n\to\infty}{\to}  1,~Q\mbox{-a.s.}\end{equation}

From \eqref{eq:renewal} we can find  $\epsilon_n>0$ with $\lim_{n\to\infty} \epsilon_n = 0$ and  satisfying
\begin{equation}Q\left( \frac{m(n)}{n} \in [1-\epsilon_n,1+\epsilon_n]\alpha\right) \  \underset{n\to\infty} {\to}\ 1.\end{equation}
Observe then that
\begin{eqnarray} Q\left( M_n \le \left\lfloor \frac{\ln n \alpha}{\ln \frac1q}\right\rfloor + k\right) &\ \ge\ &
Q\left(M_n \le  \left\lfloor \frac{\ln n \alpha}{\ln \frac1q}\right\rfloor + k, 0< m(n) \le (1+\epsilon_n) n \alpha\right)\nonumber\\
&\ge &
 Q\left(M^\G_{\left\lfloor(1+\epsilon_n)n \alpha \right\rfloor} \le  \left\lfloor \frac{\ln n \alpha}{\ln \frac1q}\right\rfloor + k-2\right) - Q (m(n) \nonumber\\ &>&  (1+\epsilon_n) n \alpha)-Q(m(n)=0).
 \end{eqnarray}
The last two terms on the righthand side tend to $0$. In addition, since  $\ln (n(1+\epsilon_n) \alpha) - \ln (n \alpha)\underset{n\to\infty} {\to} 0$, it follows from the spacing condition that for all $n$ large enough,$ \left\lfloor \frac{ \left\lfloor(1+\epsilon_n)n \alpha \right\rfloor}{\ln \frac 1q}\right\rfloor= \left\lfloor \frac{\ln n \alpha}{\ln \frac1q}\right\rfloor$. It then follows from \eqref{eq:max_geom} that \begin{equation}\liminf_{n\to\infty}  Q\left(M_n \le \left\lfloor \frac{\ln n \alpha}{\ln \frac1q}\right\rfloor + k\right)\ \ge\ e^{-q^{k-2}}.\end{equation}

We turn to the upper bound.
 \begin{eqnarray} Q\left( M_n \le \left\lfloor \frac{\ln n \alpha}{\ln \frac1q}\right\rfloor + k\right) &\ \le\ & Q\left(M_n \le  \left\lfloor \frac{\ln n \alpha}{\ln \frac1q}\right\rfloor + k-2,  m(n) \ge (1-\epsilon_n) n \alpha\right)\nonumber\\ & & \ \ \  + \ Q\left(m(n) < (1-\epsilon_n)n \alpha\right)\nonumber\\
 & \le & Q\left( M^\G_{\lceil (1-\epsilon_n)n \alpha\rceil} \le \left\lfloor \frac{\ln n \alpha}{\ln \frac1q}\right\rfloor + k\right) +o(1).
 \end{eqnarray}
 The same argument as before shows that for $n$ large enough, $\left\lfloor
 \frac{\ln \lceil(1-\epsilon_n)n \alpha \rceil }{\ln \frac 1q}\right\rfloor =\left\lfloor\frac{\ln n \alpha}{\ln \frac1q}\right\rfloor$, and so
 \begin{equation} \limsup_{n\to\infty} Q\left( M_n \le \left\lfloor \frac{\ln n \alpha}{\ln \frac1q}\right\rfloor + k\right)\  \le \ e^{-q^{k-2}}.\end{equation}

 Summarizing,
 \begin{equation}
\label{eq:QM_lim}
\lim_{n\to\infty}  Q\left(M_n \le \left\lfloor \frac{\ln n \alpha}{\ln \frac1q}\right\rfloor + k\right)\ =\ e^{-q^{k-2}}.
\end{equation}

It remains to convert the result to $Q_n$. Let $A_n =\{M_n \ge \left\lfloor \ln \ln n\right\rfloor \}$. Then $Q(A_n) \underset{n\to\infty}{\to} 1$. Let $b_n = \left\lfloor \ln \ln n \right\rfloor$.  Then as $n-b_n = n (1 +o(1))$, we conclude that the sequence $n- b_n$ also satisfies the spacing condition. Furthermore, for sufficiently large $n$, $\left\lfloor \frac{\ln (n -b_n) \alpha } {\ln 1/q} \right\rfloor = \left\lfloor \frac{\ln n  \alpha } {\ln 1/q} \right\rfloor$. Thus, from \eqref{eq:QM_lim}
\begin{equation} \lim_{n\to\infty} Q\left( M_{n- b_n}  \le \left\lfloor \frac{\ln n \alpha}{\ln \frac 1q} \right\rfloor + k\right) \ = \ e^{-q^{k-2}}.\end{equation}
 Letting  $B_n = \{M_{n- b_n} \le  \left\lfloor \frac{\ln n \alpha}{\ln \frac 1q} \right\rfloor + k\}$, it follows from the Markov property and the ergodicity of $Z$ that
  \begin{align} E^Q \left ( \ch_{B_n} \frac{1}{\varphi_c(Z_n)}\right) &\ = \   E^Q \left ( \ch_{B_n} E_{X_{n-b_n}} \frac{1}{\varphi_c(X_{b_n})} \right) \nonumber\\
\label{eq:ergo_pass}
  &\ = \ E^Q \left ( \ch_{B_n} E_{\pi^Q} \frac{1}{\varphi_c}\right) + o(1) \ =\  Q(B_n)+o(1).
  \end{align}
  Now
 \begin{align} Q\left( M_n\le \left\lfloor \frac{\ln n \alpha}{\ln \frac 1q} \right\rfloor + k, \frac{1}{\varphi_c(X_n)}\right) &\ \le\
 Q\left( \ch_{B_n}, E^Q_{X_{n-b_n}} \frac{1}{\varphi_c(X_{b_n})}\right) \nonumber\\
 & \ = \ Q(B_n) E_{\pi^Q} \frac{1}{\varphi_c}+ o(1),
 \end{align}
and so
 \begin{equation}\limsup_{n\to\infty} Q_n\left(M_n \le \left\lfloor \frac{ \ln n\alpha}{\ln 1/q}\right\rfloor+k\right) \ \le\  e^{-q^{k-2}}.\end{equation}

We turn to the lower bound. Observe that $M_n > M_{n-b_n}$ only if one of the last $b_n+1$ long gaps among the first $m(n)$ is maximal. Fix $c>0$, then for all $n$ large enough, depending on $c$ and on  the event $\{M_n > c \ln n\}$, those maximal gap among the last $b_n+1$ must begin before $n-b_n$ (because otherwise it will have length at most $b_n < c \ln n$) and end after $n-b_n$ (otherwise already included in $M_{n-b_n}$).  That is,
\begin{equation}\{M_n > M_{n-b_n} \} \cap \{M_n > c \ln n\}\subset \{ \max_{j=1,\dots, m(n-b_n)+1} \G_j \ = \ \G_{m(n-b_n)+1}\}.\end{equation}
Denote the event on the right-hand side by $C_n$. We have that
\begin{align}  Q(C_n) & \ \le\ Q(C_n, m(n) \in (  (1-\epsilon) n \alpha, (1+\epsilon)n \alpha)) + o(1)\nonumber\\
&\ \le\  2 \epsilon n \alpha \times \frac{1}{(1-\epsilon)n \alpha}+o(1)\underset{n\to\infty} { \to} \frac{2\epsilon}{1-\epsilon}.
\end{align}
Since $\epsilon$ is arbitrary, we conclude that $Q(C_n) \underset{n\to\infty}{\to}0$.
Hence
\begin{align}
E^Q\left( M_n > \left\lfloor\frac{\ln n \alpha}{\ln \frac 1q}\right\rfloor + k,\frac{1}{\varphi_c(Z_n)}\right) &\ \le\ Q\left(M_{n-b_n} > \left\lfloor \frac{  \ln n \alpha}{\ln \frac 1q}\right\rfloor + k, C_n^c,\frac{1}{\varphi_c(X_n)}\right)+Q(C_n)\nonumber\\
&\ \le\ Q\left(M_{n-b_n} > \left\lfloor \frac{  \ln (n-b_n) \alpha}{\ln \frac 1q}\right\rfloor + k,\frac{1}{\varphi_c(Z_n)}\right)+o(1).
\end{align}
The remainder of the proof is identical to the argument presented in \eqref{eq:ergo_pass}, with the obvious changes. This gives the lower bound
\begin{equation} \liminf_{n\to\infty}Q^n\left(M_n \le \frac{\left\lfloor  \ln n \alpha\right\rfloor}{\ln \frac 1q} + k\right)\ \ge\  e^{-q^{k-2}},\end{equation}
thus completing the proof.\end{proof}

\section*{Appendix: Generalization to initial segments}\label{sec:geninitialseg}
Although our approach is most natural for intervals of the form $[G_n,G_{n+1})$, most of the results can be easily extended to the general case where we consider the interval $[1,N)$. We will now briefly show how this can be done. For every $N\in\N$ there exists a unique $n=n(N)$ such that $N \in [G_{n+1},G_{n+2})$. Denote the uniform measure on $[1,N)$ by $W_N$. Then it follows from  Theorem \ref{th:structure} that
\be W_N(A)\ =\ \sum_{j=0}^{n(N)-1} \alpha_j Q^j (A)+ \alpha_n Q^n ( A |  [G_{n+1} , N)),\ee
where $G_0=0$, $\alpha_j =  (\min (G_{j+2},N)-G_{j+1}) / N$,  and  for simplicity we consider  $Q^j$ as a probability measure on $\N$ which gives zero mass to elements outside the interval $[G_{j+1},G_{j+2})$. \\

Consider now a function $F:\N \to [0,1]$, and assume that  $\lim_{j\to\infty} E^{Q_j} F = c(F)$. We will make more assumptions on $F$ later. We will basically require $F$ not to depend too much on its first and last digits. \\

Then we can write
\be \label{eq:eval_0N} E^{W_N} F\ =\ \sum_{j=1}^{N(n)-1} \alpha_j E^{Q^j }F +  \alpha_n E^{Q^n }( F   | [G_{n+1} , N))\ =\ (I)+(II).\ee

It is easy to see that along sequences satisfying either  $\alpha_n \to 0$ or $\alpha_n \to 1$, the righthand side converges to $c(F)$. However, when this is not the case, then the term (II) may be oscillatory. However, if we can show that  $E^{Q^n }( F   | [G_{n+1} , N))\to c(F)$, then it follows that $E^{W_N} F \to c(F)$. The idea is very much in the  spirit of Proposition \ref{pr:asymp}. This cannot hold for all $F$, so we need to restrict our discussion to those $F$ not affected much by first or last digits. \\

In order to do this we make some assumptions of $F$ so that the oscillations will asymptotically vanish. Denote the length of the decomposition of $x$ by $|x|$. Suppose that for each $x$ large enough, there exists $n_x$ such that $n_x,|x|-n_x\to \infty$, and if $A_x $ denotes all numbers with length $|x|$ whose decomposition differs from that of $x$ only in the first $n_x$ or last  $n_x$ digits, we will assume
\be \label{eq:not_too_much} \lim_{x\to\infty} \sup_{x' \in A_x} |F(x')-F(x)|\ =\ 0.\ee

An example of such a function is any additive functional  $S$, divided by the length of the decomposition $D$ (a random variable we localized to numbers with decompositions of fixed length in previous sections).  Another example is $e^{i \theta (S-c)/\sqrt{D}}$ for some $c$. We note that we can make weaker assumptions on $F$ for the argument to work. Before presenting the argument, we state the result:

\begin{prop}
Suppose that $F:\N \to [0,1]$ satisfies $\lim_{j\to\infty} E^{Q_j} F = c(F)$. If  \eqref{eq:not_too_much} holds, then $\lim_{N\to\infty} E^{W_N} F=c(F)$.
\end{prop}


\begin{proof}
Assume then that we have a sequence  $N_1 < N_2 < \cdots$ with $n_1(N_1) \le n_2 (N_2) \le \cdots$.  Without loss of generality, we may assume  that that $n_j < n_{j+1}$ and $\inf  \alpha_{n_j} > \rho\in (0,1)$. That is,
 \be \label{eq:important} (N_j - G_{n_j+1})/ N_j\  >\  \rho,\ee
which in turn implies  $N_j  > (1+c_2) G_{n_j+1}$. This along with the exponential growth of $(G_n)$, guarantee that for any $\epsilon>0$, there exists some $K\in \N$ and $\tilde N_j \in [G_{n_j+1},N_j)$, such that
\begin{enumerate}
\item The first $K$ digits of $\tilde N_j$ coincide with those of $N_j$.
\item All other digits of $\tilde N_j$ are zero.
\item $\tilde N_j / N_j \ge (1-\epsilon)$.
\end{enumerate}
In other words, the fact that $N_j$ is at least a certain fixed multiple (depending only on $(N_j)$ ) of $G_{n_j+1}$ means that the first digits may have some constraints, but not the last (because they cannot contribute much to the sum). This allows us to ``round" down $N_j$ to $\tilde N_j$, a near number for which the condition of being in the interval $[G_{n_j+1},\tilde N_j)$ is determined only by the first $K$  digits.\\
Now we repeat the  argument from  Proposition \ref{pr:asymp} which allows to separate the first $K<n_x$ and the last $n_x$ digits from the rest. This gives
\be \lim_{j\to\infty} E^{Q_{n_j}} \left ( F (X) | X< \tilde N_j\right)\ = \ c(F).\ee

The last step is to recover (II) for $N_j$ from the corresponding expression for $\tilde N_j$.  We have
\be E^{Q_{n_j}} \left(F(X), X < N_j\right)\  =\ E^{Q_{n_j}} \left ( F(X), X < \tilde N_j\right) + E^{Q_{n_j}} \left(F(X),   \tilde N_j\le X < N_j \right).\ee

By condition (3) in the choice of $\tilde N_j$, the  absolute value of second summand on the righthand side is bounded above by $( N_j - \tilde N_j)/(G_{n_j+2}-G_{n_j+1}) \le  \epsilon \frac{ N_j }{G_{N_{j+2}} - G_{N_j+1}}$. This implies
\be E^{Q_{n_j}} \left ( F(X)| X < N_j\right)\ =\ E^{Q_{n_j}} \left ( F(X)| X < \tilde N_j\right) \frac{ |\tilde N_j -G_{n_{j+1}}|}{|N_j - G_{n_{j+1}}|}+ \epsilon \frac{N_j}{| N_j -G_{n_{j+1}}|}O(1).\ee
Thus,
\begin{align}
\nonumber
\left |E^{Q_{n_j}} \left ( F(X)| X < N_j\right) - E^{Q_{n_j}} \left ( F(X)| X < \tilde N_j\right)\right | &\ =\ \frac{ |N_j - \tilde N_j |}{N_j -G_{n_j+1}} O(1) + \epsilon \frac{N_j}{| N_j -G_{n_{j+1}}|}O(1) \\
&\ =\ \epsilon \frac{N_j} {N_j - G_{n_j+1}}O(1)=\epsilon O(1),
\end{align}
the first equality on the second line is from condition (3) in the choice of $\tilde N_j$, and the second equality there follows from \eqref{eq:important}. Therefore $  \left  |E^{Q_{n_j}} \left ( F(X)| X < N_j\right) - c(F)\right  | =O(\epsilon)$, and it then follows from \eqref{eq:eval_0N} that  $\limsup_j  | E^{W_{N_j}} F(X) -c (F)| = \epsilon O(1) $, completing the proof.
\end{proof}


\begin{thebibliography}{BBGILMT1} 

\bibitem[Al]{Al}
H. Alpert,  \emph{Differences of multiple Fibonacci numbers}, Integers: Electronic Journal of Combinatorial Number Theory  \textbf{9} (2009), 745--749.

\bibitem[BBGILMT]{BBGILMT}
O. Beckwith, A. Bower, L. Gaudet, R. Insoft, S. Li, S. J. Miller and P. Tosteson, \emph{The Average Gap Distribution for Generalized Zeckendorf Decompositions}, Fibonacci Quarterly \textbf{51} (2013), 13--27.

\bibitem[BAN12]{benari_neumann}
Iddo Ben-Ari and Michael Neumann, \emph{Probabilistic approach to {P}erron
  root, the group inverse, and applications}, Linear Multilinear Algebra
  \textbf{60} (2012), no.~1, 39--63.

\bibitem[BILMT]{BILMT}
A. Bower, R. Insoft, S. Li, S. J. Miller and P. Tosteson, \emph{The Distribution of Gaps between Summands in Generalized Zeckendorf Decompositions} (and an appendix on \emph{Extensions to Initial Segments} with Iddo Ben-Ari), Journal  of Combinatorial Theory, Series A \textbf{135} (2015), 130--160.

\bibitem[BCCSW]{BCCSW}
E. Burger, D. C. Clyde, C. H. Colbert, G. H. Shin and Z. Wang, \emph{A Generalization of a Theorem of Lekkerkerker to Ostrowski's Decomposition of Natural Numbers}, Acta Arith. \textbf{153} (2012), 217--249.


\bibitem[CHZ]{CHZ}
L. H. Y. Chen, H.-K. Hwang and V. Zacharovas, \emph{Distribution of the sum-of-digits function of random integers: A survey}, Probability Surveys \textbf{11} (2014), 177--237.

\bibitem[CFHMN1]{CFHMN1}
M. Catral, P. Ford, P. E. Harris, S. J. Miller, D. Nelson,  \emph{Generalizing Zeckendorf's Theorem: The Kentucky Sequence}, Proceedings of the Sixteenth International Conference on Fibonacci Numbers and Their Applications, Volume 52, Number 5, pp. 69--91.

\bibitem[CFHMN2]{CFHMN2}
M. Catral, P. Ford, P. Harris, S. J. Miller  and D. Nelson, \emph{Legal Decompositions Arising from Non-positive Linear Recurrences}, preprint.

\bibitem[CFHMNPX]{CFHMNPX}
M. Catral, P. Ford,  P. E. Harris, S. J. Miller, D. Nelson, Z. Pan and H. Xu, \emph{New Behavior in Legal Decompositions Arising from Non-positive Linear Recurrences}, preprint.



\bibitem[Day]{Day}
D. E. Daykin,  \emph{Representation of Natural Numbers as Sums of Generalized Fibonacci Numbers}, J. London Mathematical Society \textbf{35} (1960), 143--160.

\bibitem[DDKMMU]{DDKMMU}
P. Demontigny, T. Do, A. Kulkarni, S. J. Miller, D. Moon and U. Varma, \emph{Generalizing Zeckendorf's Theorem to $f$-decompositions}, Journal of Number Theory \textbf{141} (2014), 136--158.

\bibitem[DDKMU]{DDKMU}
P. Demontigny, T. Do, A. Kulkarni, S. J. Miller and U. Varma, \emph{A Generalization of Fibonacci Far-Difference Representations and Gaussian Behavior}, to appear in the Fibonacci Quarterly.\hfill \\ \burl{http://arxiv.org/pdf/1309.5600v2}.

\bibitem[DFFHMPP]{DFFHMPP}
R. Dorward, P. Ford, E. Fourakis, P. E. Harris, S. J. Miller, E. Palsson and H. Paugh, \emph{A Generalization of Zeckendorf's Theorem via Circumscribed $m$-gons}, preprint.

\bibitem[DG]{DG}
M. Drmota and J. Gajdosik, \emph{The distribution of the sum-of-digits function},
J. Th\'eor. Nombr\'es Bordeaux \textbf{10} (1998), no. 1, 17--32.

\bibitem[Du]{ref1}
J.-M. Dumont, \emph{Formules sommatoires et syst\`emes de num\'eration lies aux substitutions},
S\'eminaires de Th\'eorie des Nombres, Bordeaux (1987/88).

\bibitem[DuTh1]{ref2}
J.-M. Dumont and A. Thomas, \emph{Syst\`emes de num\'eration et fonctions fractales relatifs
aux substitutions}, Theor. Comput. Sci. \textbf{65} (1989), 153--169.

\bibitem[DuTh2]{ref3}
J.-M. Dumont and A. Thomas, \emph{Gaussian asymptotic properties of the sum-of-digits function}, J. Number Th.
\textbf{62} (1997), 19--38.

\bibitem[Dur10]{durrett}
Rick Durrett, \emph{Probability: theory and examples}, fourth ed., Cambridge
  Series in Statistical and Probabilistic Mathematics, Cambridge University
  Press, Cambridge, 2010.



\bibitem[FGNPT]{FGNPT}
P. Filipponi, P. J. Grabner, I. Nemes, A. Peth\"o, and R. F. Tichy, \emph{Corrigendum
to: ``Generalized Zeckendorf expansions''}, Appl. Math. Lett.,
\textbf{7} (1994), no. 6, 25--26.

\bibitem[Fra]{ref4}
A. S. Fraenkel, \emph{Systems of numeration}, Amer. Math. Monthly
\textbf{92} (1985), no.~2, 105--114. 


\bibitem[GR]{ref5}
P. J. Grabner and M. Rigo, \emph{Distribution of additive functions with respect to numeration
systems on regular languages}, Theory Comput. Syst. \textbf{40} (2007), 205--223.

\bibitem[GT]{GT}
P. J. Grabner and R. F. Tichy, \emph{Contributions to digit expansions with respect to linear recurrences}, J. Number Theory \textbf{36} (1990), no. 2, 160--169.

\bibitem[GTNP]{GTNP}
P. J. Grabner, R. F. Tichy, I. Nemes, and A. Peth\"o, \emph{Generalized Zeckendorf expansions}, Appl. Math. Lett. \textbf{7} (1994), no. 2, 25--28.

\bibitem[Ha]{Ha} N. Hamlin,  \emph{Representing Positive Integers as a Sum of Linear Recurrence Sequences}, Abstracts of Talks, Fourteenth International Conference on Fibonacci Numbers and Their Applications (2010), pages 2--3.

\bibitem[Hog]{Ho} V. E. Hoggatt,  \emph{Generalized Zeckendorf theorem}, Fibonacci Quarterly \textbf{10} (1972), no. 1 (special issue on representations), pages 89--93.

\bibitem[Hol]{hollander}
M.~Hollander, \emph{Greedy numeration systems and regularity}, Theory Comput.
  Syst. \textbf{31} (1998), no.~2, 111--133. 


\bibitem[Ke]{Ke} T. J. Keller,  \emph{Generalizations of Zeckendorf's theorem}, Fibonacci Quarterly \textbf{10} (1972), no. 1 (special issue on representations), pages 95--102.

\bibitem[LT]{LT}
M. Lamberger and J. M. Thuswaldner, \emph{Distribution properties of digital expansions
arising from linear recurrences}, Math. Slovaca \textbf{53} (2003), no. 1, 1--20.

\bibitem[LR]{ref6}
P. B. A. Lecomte and M. Rigo, \emph{Numeration systems on a regular language}, Theory
Comput. Syst. \textbf{34} (2001), no. 1, 27--44.

\bibitem[Len]{Len} T. Lengyel, \emph{A Counting Based Proof of the Generalized Zeckendorf's Theorem}, Fibonacci Quarterly \textbf{44} (2006), no. 4, 324--325.

\bibitem[Lek]{Lek} C. G. Lekkerkerker,  \emph{Voorstelling van natuurlyke getallen door een som van getallen van Fibonacci}, Simon Stevin \textbf{29} (1951-1952), 190--195.

\bibitem[KKMW]{KKMW} M. Kolo$\breve{{\rm g}}$lu, G. Kopp, S. J. Miller and Y. Wang,  \emph{On the number of summands in Zeckendorf decompositions}, Fibonacci Quarterly \textbf{49} (2011), no. 2, 116--130.

\bibitem[Kos]{Kos} T. Koshy, \emph{Fibonacci and Lucas Numbers with Applications}, Wiley-Interscience, New York, $2001$.

\bibitem[Ma]{ref7}
E. Manstavi$\check{{\rm c}}$ius, \emph{Probabilistic theory of additive functions related to systems of numeration}, New trends in probability and statistics, Vol. \textbf{4} (Palanga, 1996), VSP, Utrecht, 1997, pp. 413--429.



\bibitem[MW00]{maxwell_woodroofe}
Michael Maxwell and Michael Woodroofe, \emph{Central limit theorems for additive functionals of {M}arkov chains}, Ann. Probab. \textbf{28} (2000),  no. 2, 713--724.

\bibitem[MW1]{MW1}
S. J. Miller and Y. Wang,  \emph{From Fibonacci numbers to Central Limit Type Theorems}, Journal of Combinatorial Theory, Series A \textbf{119} (2012), no. 7, 1398--1413.

\bibitem[MW2]{MW2}
S. J. Miller and Y. Wang, \emph{Gaussian Behavior in Generalized Zeckendorf Decompositions}, Combinatorial and Additive Number Theory, CANT 2011 and 2012 (Melvyn B. Nathanson, editor), Springer Proceedings in Mathematics \& Statistics (2014), 159--173.

\bibitem[Sha]{shallit}
Jeffrey Shallit, \emph{Numeration systems, linear recurrences, and regular
  sets}, Inform. and Comput. \textbf{113} (1994), no.~2, 331--347. 




\bibitem[Ste1]{Ste1}
W. Steiner, \emph{Parry expansions of polynomial sequences}, Integers \textbf{2} (2002), Paper A14.

\bibitem[Ste2]{Ste2}
W. Steiner, \emph{The Joint Distribution of Greedy and Lazy Fibonacci Expansions}, Fibonacci Quarterly \textbf{43} (2005), 60--69.

\bibitem[Sto]{Sto}
K. B. Stolarsky, \emph{Power and exponential sums of digital sums related to binomial coefficient parity}, SIAM J. Appl. Math. \textbf{32} (1977), no. 4, 717--730.

\bibitem[Ze]{Ze} E. Zeckendorf, \emph{Repr\'esentation des nombres naturels par une somme des nombres de Fibonacci ou de nombres de Lucas}, Bulletin de la Soci\'et\'e Royale des Sciences de Li\'ege \textbf{41} (1972), pages 179--182.


\end{thebibliography}

\ \\

\end{document}